\def\TC{\mathrm{TC}}
\def\cat{\mathrm{cat}}
\newcommand{\secat}{\mathrm{secat}}
\def\P{\mathrm{P}}
\def\gd{\mathrm{gd}}
\def\imm{\mathrm{imm}}
\def\N{|\overline{n}|}
\def\immto{\looparrowright}
\def\ppn{\P_{\overline{n}}}
\def\ppm{\P_{\overline{m}}}
\def\xpn{\xi_{\overline{n}}}
\def\spn{S_{\overline{n}}}
\newcommand{\R}{\mathbb{R}}
\newcommand{\Z}{\mathbb{Z}}
\newcommand{\co}{\colon\thinspace}
\newcommand{\p}{\mathrm{P}}
\newcommand{\xx}{\overline{x}}
\newcommand{\yy}{\overline{y}}
\newcommand{\m}{\overline{m}}
\newcommand{\n}{\overline{n}}
\newtheorem{teorema}{Theorem}[section]
\newtheorem{proposicion}[teorema]{Proposition}
\newtheorem{lema}[teorema]{Lemma}
\newtheorem{definicion}[teorema]{Definition}
\newtheorem{corolario}[teorema]{Corollary}
\newtheorem{nota}[teorema]{Remark}
\newtheorem{ejemplo}[teorema]{Example}
\title[Motion planning in projective product spaces]{Topological complexity of motion planning in projective product spaces}
\author[Gonz\'alez, Grant, Torres-Giese, and Xicot\'encatl]{Jes\'us Gonz\'alez, Mark Grant, Enrique Torres-Giese, and Miguel Xicot\'encatl}
\thanks{The first author was supported by Conacyt Grant 102783 during the time this research was conducted.}
\keywords{Topological complexity, product projective spaces, Euclidean immersions of manifolds, generalized axial maps.}
\subjclass[2010]{55M30, 57R42, 68T40.}
\date{\today}
\begin{document}

\begin{abstract}
We study Farber's topological complexity (TC) of Davis' projective product spaces (PPS's). We show that, in many non-trivial instances, the TC of  PPS's coming from at least two sphere factors is (much) lower than the dimension of the manifold. This is in high contrast with the known situation for (usual) real projective spaces for which, in fact, the Euclidean immersion dimension and TC are two facets of the same problem. Low TC-values have been observed for infinite families of non-simply connected spaces only for H-spaces, for finite complexes whose fundamental group has cohomological dimension not exceeding $2$, and now in this work for infinite families of PPS's. We discuss general bounds for the TC (and the Lusternik-Schnirelmann category) of PPS's, and compute these invariants for specific families of such manifolds. Some of our methods involve the use of an equivariant version of TC. We also give a characterization of the Euclidean immersion dimension of PPS's through generalized concepts of axial maps and, alternatively, non-singular maps. This gives an explicit explanation of the known relationship between the generalized vector field problem and the Euclidean immersion problem for PPS's.
\end{abstract}

\maketitle
\tableofcontents

\section{Introduction and notation}\label{intro}

As shown in~\cite{FTY}, the topological complexity (TC) and the Euclidean immersion dimension (Imm) of the $n$-dimensional real projective space $\P^n$  are related by
\begin{equation}\label{classic}
\TC(\P^n)=\mathrm{Imm(\P^n)}-\epsilon(n)=2n-\delta(n)
\end{equation}
where
$$\epsilon(n)=\begin{cases}1,&n=1,3,7;\\0,&\mbox{otherwise},\end{cases}
$$
$\delta(n)=\mathrm{O}(\alpha(n))$, and $\alpha(n)$ denotes the number of ones in the binary expansion of~$n$. It is natural to ask whether the nice phenomenon in the first equality in~(\ref{classic}) is part of a general property of manifolds. Not only does this question have a negative answer, but even close relatives of real projective spaces fail to satisfy the first equality in~(\ref{classic}). For instance, in view of~\cite{AsDaGo} and~\cite{G-robotics}, the failure holds for lens spaces whose fundamental group has torsion of the form $2^e$ for $e>1$. The same answer is observed in a forthcoming paper by two of the authors in which they
study flag manifolds whose fundamental group is an elementary 2-group of rank greater than $1$. This paper now shows that the list of counterexamples extends to Davis' projective product spaces, a family of manifolds giving a rather natural generalization of real projective spaces, and which, in particular, have $\mathbb{Z}_2$ as their fundamental groups (in the `generic' case). Indeed, Theorem~\ref{cota1} in this paper shows that, in contrast to the second equality in~(\ref{classic}), the topological complexity of a projective product space coming from at least two sphere factors can be much lower than the dimension of the manifold. Thus, in those cases, more than half the homotopy obstructions in the motion planning problem for $\P_{\overline{n}}$ are trivial (cf.~Remark~\ref{lasobs} and the considerations after Theorem~\ref{cota1}). Up to the authors knowledge, this gives the first infinite family of non-simply connected closed manifolds which are not H-spaces and whose TC is lower than their dimension (cf.~\cite{CV,LS}; the upper bound in~\cite[Theorem~3]{CF} should be noted, too).

\medskip
In the rest of this introductory section we set up notation and recall needed preliminary results. We use the reduced form of the Schwarz genus (also called sectional category, and denoted by $\secat$) of a fibration, i.e.\ a trivial fibration has zero genus. In particular, we consider the reduced form of the Lusternik-Schnirelmann category (cat) and that of Farber's topological complexity (TC) of a space $X$---the latter being the reduced Schwarz genus of the double evaluation map $X^{[0,1]}\to X\times X$ sending a path $\gamma\colon[0,1]\to X$ to the pair $(\gamma(0),\gamma(1))$. Thus, $\cat(X)=\TC(X)=0$ for a contractible space~$X$. We will also assume the reader is familiar with~\cite{davispps}, and we next briefly recall the required results from that paper.

\medskip
We let $\overline{n}$ stand for an $r$-tuple $(n_1,\ldots,n_r)$ of positive integers with $n_1\leq\cdots\leq n_r$. We consider the diagonal action of $\mathbb{Z}_2$ on $S_{\overline{n}}:=S^{n_1}\times\cdots\times S^{n_r}$, and let $\ppn$ denote the resulting orbit space (so $\P_{(n_1)}$ is the usual real projective space $\P^{n_1}$). We set $\N:=\dim(\ppn)=\dim(\spn)=\sum n_i$ and $\ell(\overline{n})=r$. The real line bundle associated to the obvious covering $\spn\to\ppn$, denoted by $\xpn$ and called the canonical line bundle over $\ppn$, can be used to identify the stable class of the tangent bundle $\tau_{\ppn}$ since
\begin{equation}\label{tangentstableclass}
\tau_{\ppn}\oplus r\varepsilon\approx(\N+r)\xpn.
\end{equation}
Here $\varepsilon$ stands for a trivial line bundle.

\smallskip
The total space of the $k$-fold iterated Whitney sum of $\xpn$ is given by the Borel construction $k\xpn=\spn\times_{\mathbb{Z}_2}\mathbb{R}^k$. In particular, the projectivization of $k\xpn$ is given by
\begin{equation}\label{ksuma}
P(k\xpn)=\ppn\times\P^{k-1}.
\end{equation}

The diagonal inclusion $S^{n_1}\hookrightarrow\spn$ and the projection onto the first factor $\spn\to S^{n_1}$ induce corresponding maps $j\colon\P^{n_1}\hookrightarrow\ppn$ and $p\colon\ppn\to\P^{n_1}$ satisfying
\begin{equation}\label{hopfpreservados} 
j^*(\xpn)\approx\xi_{n_1}\mbox{, \ \ }p^*(\xi_{n_1})\approx\xpn\mbox{, \ \ and \ \ }p\circ j=\mathrm{Id}.
\end{equation}

For $2\leq i\leq r$ there are mod 2 cohomology classes $x_i$ in $\ppn$ with $\dim(x_i)=n_i$ such that the mod 2 cohomology ring of $\ppn$ is given by
\begin{equation}\label{mod2cohring}
H^*(\ppn;\mathbb{Z}_2)=H^*(\P^{n_1};\mathbb{Z}_2)\otimes\Lambda[x_2,\ldots,x_r]
\end{equation}
(where $\Lambda$ denotes an exterior algebra) with the only exception that, if $n_1$ is even, then $x_i^2=x^{n_1}x_i$ whenever $n_i=n_1$. Here $x\in H^1(\P^{n_1};\mathbb{Z}_2)$ satisfies\begin{equation}\label{xsgrandesrestringentrivial}
\mbox{$x=w_1(\xpn),\;$  but all classes $x_i$ restrict trivially under the inclusion $j$.}
\end{equation}

We also need the concept of \hspace{.4mm}``generalized axial map'' as defined in~\cite{AsDaGo}: For a real vector bundle $\alpha$ over a space $X$, we let $S(\alpha)$ and $P(\alpha)$ stand, respectively, for the sphere and projectivized bundles associated to $\alpha$. Let $h_\alpha$ denote the Hopf line bundle over $P(\alpha)$ splitting off $\pi^*(\alpha)$, where $\pi\colon P(\alpha)\rightarrow X$ is the projection. A Hopf-type map\footnote{This is called an `axial map' in~\cite{AsDaGo}, but we have to modify the name in view of Definition~\ref{axialchico} in the next section.} for $\alpha$ is any continuous map $P(\alpha)\rightarrow \P^N$ for which the composite $P(\alpha)\to \P^N\hookrightarrow \P^\infty$ classifies $h_\alpha$. In particular,~(\ref{ksuma}) allows us to talk about Hopf-type maps defined on products of the form $\ppn\times\P^s$.

\section{Immersion dimension}\label{immaxi}
\subsection{Axial maps} Consider a pair of sequences $\overline{n}=(n_1,\ldots,n_r)$ and $\overline{m}=(m_1,\ldots,m_s)$.
\begin{definicion}\label{axialchico}
A continuous map $\alpha\colon\P_{\overline{n}}\times\P_{\overline{m}}\to\P^\infty$ is said to be axial if its restriction to each of the axes classifies the corresponding canonical bundle. By~$(\ref{xsgrandesrestringentrivial})$ this means that $\alpha$ corresponds to the class $x\otimes1+1\otimes x$. A continuous map $\P_{\overline{n}}\times\P_{\overline{m}}\to\P^L$ is called axial if the composite $\P_{\overline{n}}\times\P_{\overline{m}}\to\P^L\hookrightarrow\P^\infty$ is axial.
\end{definicion}

\begin{nota}\label{tcchico} By~$(\ref{hopfpreservados})$, the existence of an axial map $\ppn\times\ppm\to\P^L$ depends only on $n_1$ and $m_1$. In particular, according to~\cite{FTY}, if $n_1=m_1$, $\TC(\P^{n_1})$ is the minimal integer $L$ for which there is an axial map $\ppn\times\ppm\to\P^L$. In any case, an axial map $\ppn\times\ppm\to\P^L$ can exist only if $L\geq\max\{n_1,m_1\}$.
\end{nota}

A slightly weaker concept of axiality arises by requiring that the restriction of $\alpha\colon\P_{\overline{n}}\times\P_{\overline{m}}\to\P^\infty\hspace{.2mm}$ to $\hspace{.5mm}j\times j\colon\P^{n_1}\times\P^{m_1}\hookrightarrow\ppn\times\ppm\hspace{.4mm}$ is axial in the usual sense. Yet, nothing is lost with respect to the more restrictive Definition~$\ref{axialchico}$ if we only care (as we will in this subsection) about the {\it existence} of such maps. Indeed, in view of~$(\ref{mod2cohring})$, the only potential problem arises when $n_2=1$ or $m_2=1$.  To fix ideas, assume $n_2=\cdots=n_\ell=1<n_{\ell+1}$ $(\ell\leq r)$. Then, the restriction of $\alpha$ to its first axis might conceivably correspond to a class of the form $x+\sum_{i=2}^\ell\epsilon_ix_i$. Although such a situation is perfectly attainable, it can be easily fixed. Indeed,~\cite[Theorem~2.20]{davispps} asserts that, under the present conditions, $\ppn$ is homeomorphic to $(S^1)^{\ell-1}\times\P_{\overline{q}}$ where $\overline{q}=(1,n_{\ell+1},\ldots,n_r)$. Thus, 
unless $m_2=1$ (in which case the following adjustment would have to be made on the second axis, too), the required axial map is given by the composite $\P_{\overline{n}}\times\ppm\to\P_{\overline{n}}\times\ppm\overset{\alpha\;}\rightarrow\P^\infty$ where the first map is $\gamma\times1$, and $\gamma$ is the projection $\P_{\overline{n}}\to\P_{\overline{q}}\hspace{.3mm}$ followed by the inclusion $\P_{\overline{q}}\hookrightarrow\ppn$.

\medskip
As a consequence of Remark~\ref{tcchico}, the nice relationship between $\TC$ and the existence of suitable axial maps between (usual) real projective spaces cannot hold for a $\ppn$ with $\ell(\overline{n})>1$. Yet, the axial map approach can be used to characterize the immersion dimension of $\ppn$ in a suitable range of dimensions. Indeed, the following are standard consequences of~\cite{davispps} and \cite{sanderson}:

\smallskip
\begin{enumerate}[(I)]
\item\label{laaxialcritic}The existence of a smooth immersion $\P_{\overline{n}}\immto\mathbb{R}^M$ implies the existence of an axial map $\P^{n_1}\times\P^{\N+r-1}\to\P^{M+r-1}$.

\smallskip
\item\label{ms}The converse of~(\ref{laaxialcritic}) holds provided $\ppn$ is not stably parallelizable and $n_1<2(M-\N)$.
\end{enumerate}

\medskip
We will now elaborate on the previous facts from a purely `projective-product' viewpoint---without relying on the connection through the generalized vector field problem.

\begin{proposicion}\label{immgivesaxi}
The existence of an immersion $\P_{\overline{n}}\immto\mathbb{R}^M$ implies the existence of a Hopf-type map $\P_{\overline{n}}\times\P^{\N+r-1}\to\P^{M+r-1}$.
\end{proposicion}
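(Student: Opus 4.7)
The plan is to reformulate the existence of a Hopf-type map $\ppn\times\P^{\N+r-1}\to\P^{M+r-1}$ as the existence of a line sub-bundle inclusion $h_\alpha\hookrightarrow (M+r)\varepsilon$ over $P(\alpha)=\ppn\times\P^{\N+r-1}$, where $\alpha:=(\N+r)\xpn$. This rests on the standard equivalence that giving a continuous map $Y\to\P^{k-1}$ amounts to presenting a line sub-bundle of the trivial rank-$k$ bundle over $Y$, and that composition with $\P^{k-1}\hookrightarrow\P^\infty$ classifies that sub-bundle. Thus it suffices to realize $h_\alpha$ as a line sub-bundle of $(M+r)\varepsilon$ over $P(\alpha)$.

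Starting from the assumed immersion $\ppn\immto\mathbb{R}^M$, standard differential topology supplies a normal bundle $\nu$ of rank $M-\N$ with $\tau_{\ppn}\oplus\nu\approx M\varepsilon$. Adding $r\varepsilon$ to both sides and invoking the stable tangent identity~(\ref{tangentstableclass}), we obtain
$$(\N+r)\xpn\oplus\nu\approx(M+r)\varepsilon,$$
so that $\alpha$ itself already embeds as a direct summand of $(M+r)\varepsilon$ over $\ppn$.

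Pulling this identity back along the projection $\pi\colon P(\alpha)\to\ppn$ yields $\pi^*\alpha\oplus\pi^*\nu\approx(M+r)\varepsilon$ on $P(\alpha)$. Since $h_\alpha$ is by definition a direct summand of $\pi^*\alpha$, it is \emph{a fortiori} a line sub-bundle of $(M+r)\varepsilon$, and the associated classifying map $P(\alpha)\to\P^{M+r-1}$ is the desired Hopf-type map.

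The heart of the argument is really the translation between Hopf-type maps into finite projective spaces and sub-line bundle inclusions into trivial bundles; once that correspondence is in place, the rest is a short manipulation of~(\ref{tangentstableclass}) together with the normal bundle identity, and no serious obstacle arises. A minor point worth flagging in writing up the details is that the chosen splitting of $\pi^*\alpha$ featuring $h_\alpha$ must be compatible with the embedding into $(M+r)\varepsilon$, but this is automatic once one uses orthogonal complements throughout.
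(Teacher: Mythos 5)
Your argument is correct and is essentially the paper's proof: both start from the normal bundle identity $\tau_{\ppn}\oplus\nu\approx M\varepsilon$, add $r\varepsilon$ and apply~(\ref{tangentstableclass}) to embed $(\N+r)\xpn$ into $(M+r)\varepsilon$, and then extract the Hopf-type map from the resulting inclusion of $P((\N+r)\xpn)$ into $P((M+r)\varepsilon)=\ppn\times\P^{M+r-1}$ followed by projection. Your phrasing in terms of exhibiting $h_\alpha$ as a line sub-bundle of $(M+r)\varepsilon$ makes slightly more explicit the (standard) reason why the resulting map is indeed Hopf-type, but the construction is the same.
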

\begin{proof}
Let $\varepsilon$ be the trivial line bundle over $\ppn$ and $\nu$ the normal bundle of the given immersion. From~(\ref{tangentstableclass}) we have the composite
$$(\N+r)\xi_{\overline{n}}\hookrightarrow(\N+r)\xi_{\overline{n}}\oplus\nu=(\tau_{\P_{\overline{n}}}\oplus r\varepsilon)\oplus\nu=(M+r)\varepsilon.
$$
The required  Hopf-type map is given by the composite $$\P_{\overline{n}}\times\P^{\N+r-1}{\hspace{.3mm}=\hspace{.3mm}}P((\N+r)\xi_{\overline{n}}){\hspace{.3mm}\hookrightarrow\hspace{.3mm}} P\left((M+r)\varepsilon\right){\hspace{.3mm}=\hspace{.3mm}}\ppn\times\P^{M+r-1}\hspace{-.3mm}\stackrel{\mathrm{proj}}\longrightarrow\P^{M+r-1}$$
(cf.~\cite[Section~2]{AsDaGo}).
\end{proof}

\begin{nota}\label{radial}
The converse of Proposition~$\ref{immgivesaxi}$ can be proved (under an additional hypothesis) in terms of a standard application of Haefliger-Hirsch homotopy approximation of monomorphisms by skew-maps in the meta\-stable range~$($\cite{HH},  compare with~\cite{AdGiJa} or \cite[Corollary~2.8]{AsDaGo}$)$. Indeed, the axial map in the conclusion of Proposition~$\ref{immgivesaxi}$ is double covered by a $\mathbb{Z}_2$-equivariant map
$$
S((\N+r)\xi_{\overline{n}})=S_{\overline{n}}\times_{\mathbb{Z}_2}S^{\N+r-1}\to S^{M+r-1}.
$$
This and the projection $(\N+r)\xi_{\overline{n}}\to\ppn$ determine a map $S((\N+r)\xi_{\overline{n}})\to\ppn\times S^{M+r-1}$ which, after radial extension, yields a skew map $(\N+r)\xi_{\overline{n}}\to(M+r)\varepsilon\hspace{.5mm}$ over $\ppn$. Theorem~$1.2$ in~\cite{HH} claims that the latter map can be skew-deformed to a bundle monomorphism $\phi\colon(\N+r)\xi_{\overline{n}}\hookrightarrow(M+r)\varepsilon$ provided
\begin{equation}\label{rmetastable}
3\N<2M.
\end{equation}
Coker$(\phi)$ is then an $(M-\N)$-dimensional bundle which, after taking into account~$(\ref{tangentstableclass})$ and cancelling $r$ trivial sections, yields an isomorphism $\tau_{\ppn}\oplus\mbox{Coker}(\phi)=M\varepsilon$. Thus \cite{hirsch} asserts that Coker$(\phi)$ is the normal bundle of an immersion, as required.
\end{nota}

Of course, the hypothesis~(\ref{rmetastable}) is much stronger than the arithmetical condition in~(\ref{ms}), a hypothesis where $n_1$ plays a more relevant role (and which is in accordance to Remark~\ref{tcchico}).

\begin{proposicion}\label{hopfaxial}
There is a Hopf-type map $\P_{\overline{n}}\times\P^{\N+r-1}\to\P^{M+r-1}$ if and only if there is an axial map $\P^{n_1}\times\P^{\N+r-1}\to\P^{M+r-1}$.
\end{proposicion}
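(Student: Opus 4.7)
The plan is to prove both directions by naturality, precomposing with the maps $j\times\mathrm{Id}$ or $p\times\mathrm{Id}$ supplied by~(\ref{hopfpreservados}). The key preliminary identification is that, under the equality $P((\N+r)\xpn)=\ppn\times\P^{\N+r-1}$ of~(\ref{ksuma}), one has $h_{(\N+r)\xpn}=\xpn\boxtimes\xi_{\N+r-1}$; equivalently,
$w_1(h_{(\N+r)\xpn})=x\otimes1+1\otimes y$
in $H^1(\ppn\times\P^{\N+r-1};\mathbb{Z}_2)$, where $x=w_1(\xpn)$ and $y=w_1(\xi_{\N+r-1})$. This follows from the splitting principle via the factorization $(\N+r)\xpn\cong\xpn\otimes(\N+r)\varepsilon$ together with the standard rule $h_{\alpha\otimes L}=h_\alpha\otimes\pi^*L$ for a line bundle $L$ (alternatively, from a direct inspection of the Borel-construction model).

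Once this is granted, and since $\P^\infty$ is a $K(\mathbb{Z}_2,1)$, a continuous map $\ppn\times\P^{\N+r-1}\to\P^{M+r-1}$ is Hopf-type if and only if it pulls the generator of $H^1(\P^\infty;\mathbb{Z}_2)$ back to $x\otimes1+1\otimes y$; and by Definition~\ref{axialchico}, a continuous map $\P^{n_1}\times\P^{\N+r-1}\to\P^{M+r-1}$ is axial if and only if it pulls the same generator back to $x'\otimes1+1\otimes y$, where $x':=w_1(\xi_{n_1})$. For the forward implication, given a Hopf-type $f$ I would consider $f\circ(j\times\mathrm{Id})$; since $j^*(x)=x'$ by $j^*(\xpn)\approx\xi_{n_1}$ in~(\ref{hopfpreservados}), the new pulled-back class is $x'\otimes1+1\otimes y$, so the composite is axial. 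For the converse, given an axial $\alpha$ I would consider $\alpha\circ(p\times\mathrm{Id})$; since $p^*(x')=x$ by $p^*(\xi_{n_1})\approx\xpn$, the composite realizes $x\otimes1+1\otimes y$ and is therefore Hopf-type.

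The only non-formal step is the initial identification of $h_{(\N+r)\xpn}$; once that cohomological dictionary is in place, the rest is a one-line naturality check using the relations in~(\ref{hopfpreservados}). In particular, the equivalence is entirely cohomological and makes no use of the metastable-range hypothesis~(\ref{rmetastable}), consistent with the fact that this Proposition, together with Proposition~\ref{immgivesaxi}, is intended to recast the immersion problem in terms of axial maps without passing through the generalized vector field problem.
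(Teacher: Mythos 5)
Your proof is correct, and it handles the one genuinely non-formal point more cleanly than the paper does. Both arguments reduce the proposition to the same key identification, namely that under the equality $P((\N+r)\xpn)=\ppn\times\P^{\N+r-1}$ the class $w_1\bigl(h_{(\N+r)\xpn}\bigr)$ equals $x\otimes1+1\otimes y$; once that is in place, the forward and backward implications are the same one-line naturality check along $j\times\mathrm{Id}$ and $p\times\mathrm{Id}$ that you give, using the first two relations of~(\ref{hopfpreservados}) and $p\circ j=\mathrm{Id}$. Where you diverge is in how the identification is obtained. You compute it directly from the factorization $(\N+r)\xpn\cong\xpn\otimes(\N+r)\varepsilon$ and the rule $h_{\alpha\otimes L}\cong h_\alpha\otimes\pi^*L$, which immediately yields $h_{(\N+r)\xpn}\cong h_{(\N+r)\varepsilon}\otimes\pi^*\xpn$ and hence $w_1 = 1\otimes y + x\otimes1$; this is self-contained and elementary. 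The paper instead passes through Proposition~\ref{immgivesaxi}: it fixes a Hopf-type map arising from some large-codimension immersion of $\ppn$, restricts along $j\times\mathrm{Id}$ to $\P^{n_1}\times\P^{\N+r-1}$, invokes~\cite{AdGiJa} to know that the restricted Hopf-type map on ordinary projective spaces is axial, and then uses naturality of the Hopf construction under $p$ (together with~(\ref{xsgrandesrestringentrivial})) to lift that information back and rule out extraneous degree-one terms $\sum\mu_ix_i$. The two routes buy different things: yours avoids any appeal to immersions or to~\cite{AdGiJa} and makes the statement visibly a pure cohomological/line-bundle fact; the paper's is longer but makes explicit the bridge to the classical axial-map theory on ordinary projective spaces, which is useful for its later discussion of~(\ref{laaxialcritic}) and~(\ref{ms}). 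Your closing observation that no metastable-range hypothesis is used is accurate and consistent with the paper.
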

\begin{proof} In view of~(\ref{xsgrandesrestringentrivial}), it suffices to check that the map $\ppn\times\P^{\N+r-1}\to\P^\infty$ that classifies the Hopf line bundle $h_{(\N+r)\xpn}$ corresponds to $x\otimes1+1\otimes x$. For this purpose, we may assume without loss of generality that the given Hopf-type map arises from an immersion as in Proposition~\ref{immgivesaxi} (say for a large enough $M$---this is irrelevant for the intended goal). Then, with the notation of that result, we see from~(\ref{hopfpreservados}) that, by restricting the isomorphism $(\N+r)\xi_{\overline{n}}\oplus\nu=(M+r)\varepsilon$ under the inclusion $j\colon \P^{n_1}\hookrightarrow\ppn$, we get a Hopf-type map
$$
\P^{n_1}\times\P^{\N+r-1}=P((\N+r)\xi_{n_1})\hookrightarrow P((\N+r)\xi_{\overline{n}})=\P_{\overline{n}}\times\P^{\N+r-1}\to\P^{M+r-1}
$$
which, as proved in~\cite{AdGiJa}, must also be an axial map. Thus,~(\ref{xsgrandesrestringentrivial}) implies that $h_{(\N+r)\xpn}$ corresponds, under the identification $P((\N+r)\xpn)=\ppn\times\P^{\N+r-1}$ in~(\ref{ksuma}), to a class of the form $$1\otimes x+(x+\sum\mu_ix_i)\otimes1$$ where the summation runs over indexes $i$ with $n_i=1$, and each $\mu_i$ is either 0 or 1. But the first isomorphism in~(\ref{hopfpreservados}) and the naturality of the construction of Hopf line bundles imply $\mu_i=0$ for all relevant $i$.
\end{proof}

\begin{ejemplo}\label{kee}
The arithmetical hypothesis in~\emph{(\ref{ms})} is superfluous when $\ell(\overline{n})=1$, but it is needed if $\ell(\overline{n})>1$. From our perspective, such a phenomenon is due to the fact that, although the immersion dimension of any standard real projective space holds within Haefliger's metastable range~$($\cite{AdGiJa}$)$, as noted in~\cite{davispps}, a projective product space $\ppn$ with $\ell(\overline{n})>1$ usually admits (very) low-codimension Euclidean immersions---compare to Remark~$\ref{aclara}$ below. For instance\footnote{We thank Kee Lam for kindly pointing out this example.}, the non-parallelizable $\P_{(12,14)}$ does not immerse in $\mathbb{R}^{30}$ in view of~\cite[Theorem~3.4]{davispps}, \cite[Lemma~2.2]{sanderson}, and~\cite{KD} (in that order), but the existence of the corresponding axial map in~\emph{(\ref{laaxialcritic})} is obtained in~\cite{KD} through a Postnikov tower argument.
\end{ejemplo}

Despite Example~\ref{kee}, the method of proof of the main result in~\cite{AdGiJa} yields:
\begin{proposicion}\label{agj}
If $\gd\left(-(\N+r)\xi_{n_1}\right)>\lceil{(n_1+1)/2}\rceil$, then the arithmetical hypothesis in~\emph{(\ref{ms})} is superfluous.
\end{proposicion}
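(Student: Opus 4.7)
The plan is to adapt the obstruction-theoretic argument of~\cite{AdGiJa}, which circumvents metastability in the case of usual real projective spaces, to the projective-product setting. Given an axial map $\P^{n_1}\times\P^{\N+r-1}\to\P^{M+r-1}$, Proposition~\ref{hopfaxial} supplies a Hopf-type map $\ppn\times\P^{\N+r-1}\to\P^{M+r-1}$, and the double-cover plus radial extension recipe of Remark~\ref{radial} produces a skew map $\Phi\colon(\N+r)\xi_{\overline{n}}\to(M+r)\varepsilon$ over $\ppn$. As in Remark~\ref{radial}, it suffices to show that $\Phi$ can be skew-deformed to a bundle monomorphism; its cokernel then provides the normal bundle of the desired immersion via Hirsch's theorem.

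Rather than invoking~\cite[Theorem~1.2]{HH} under the crude dimensional bound~(\ref{rmetastable}), I would treat the skew-deformation as a sectioning problem for an appropriate Stiefel bundle over $\ppn$, whose obstructions lie in cohomology groups with coefficients in homotopy groups of Stiefel manifolds. Following~\cite{AdGiJa}, the hypothesis $\gd(-(\N+r)\xi_{n_1})>\lceil(n_1+1)/2\rceil$ should imply that, on $\P^{n_1}$, each such obstruction either lies above its cohomological range of potential non-triviality or is killed by the secondary-operation analysis carried out in that reference.

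The main obstacle will be porting the obstruction vanishing from $\P^{n_1}$ up to $\ppn$. The plan for this is to exploit the maps $j$ and $p$ of~(\ref{hopfpreservados}): since $p\circ j=\mathrm{Id}$ and $p^*\xi_{n_1}\approx\xpn$, any splitting produced on $\P^{n_1}$ pulls back via $p$ to one on $\ppn$, and the skew map under consideration is naturally related to its restriction to $\P^{n_1}\times\P^{\N+r-1}$. The additional classes $x_2,\ldots,x_r$ of~(\ref{mod2cohring}) restrict trivially under $j$ by~(\ref{xsgrandesrestringentrivial}), so a retraction argument using $p$ should confine the effective obstruction analysis to the image of $p^*$, where the AGJ calculation applies directly and yields the desired skew-monomorphism over $\ppn$.
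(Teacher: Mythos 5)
There is a genuine gap in the middle step, and the argument as structured cannot succeed. All the content of removing the arithmetical hypothesis lies in the range $\N<M<\imm(\ppn)$, where the correct conclusion is that \emph{no} axial map $\P^{n_1}\times\P^{\N+r-1}\to\P^{M+r-1}$ exists (since no immersion does). Your plan, however, aims to show that for the given $M$ the skew map of Remark~\ref{radial} deforms to a monomorphism $(\N+r)\xpn\hookrightarrow(M+r)\varepsilon$; by the last step of Remark~\ref{radial} (via \cite{hirsch}) such a monomorphism would already produce an immersion $\ppn\immto\R^M$, so the statement you are trying to prove is false for exactly those $M$ where the proposition has content. Relatedly, the key assertion that the hypothesis $\gd\left(-(\N+r)\xi_{n_1}\right)>\lceil(n_1+1)/2\rceil$ makes the lifting obstructions on $\P^{n_1}$ vanish is unsubstantiated and misreads the role of the hypothesis: a \emph{lower} bound on geometric dimension is a non-existence statement about low-dimensional representatives of $-(\N+r)\xi_{n_1}$ and cannot by itself kill obstructions to producing one. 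The passage from an axial (or Hopf-type) map to a bundle monomorphism is precisely what the arithmetical condition $n_1<2(M-\N)$ guarantees---that is the content of~(\ref{ms})---and outside that range it can genuinely fail even though the axial map exists, as Example~\ref{kee} (via \cite{KD}) shows; so ``following \cite{AdGiJa}'' does not supply the vanishing for free. (Your porting step using $p^*\xi_{n_1}\approx\xpn$ and $p\circ j=\mathrm{Id}$ is fine, but it is not where the difficulty lies.)

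What is missing is the single idea the paper's proof turns on: reduce to the critical codimension and invoke Davis' immersion formula, with no obstruction theory at all. Assume an axial map exists for some $M$ while $\ppn$ (not stably parallelizable, as forced by the hypothesis) does not immerse in $\R^M$; composing with a linear inclusion of projective spaces one may take $M=\imm(\ppn)-1>\N$. Then \cite[Theorem~3.4]{davispps} gives
$$
M-\N=\imm(\ppn)-\N-1=\gd\left(-(\N+r)\xi_{n_1}\right)-1\geq\left\lceil\frac{n_1+1}{2}\right\rceil,
$$
so $n_1<2(M-\N)$: the arithmetical hypothesis holds after all, and~(\ref{ms}) yields an immersion in $\R^M$, a contradiction. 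Thus the $\gd$-hypothesis enters arithmetically, through the size of the critical codimension, rather than through any Stiefel-bundle sectioning or secondary-operation analysis; your proposal would need to be reorganized along these lines to close the gap.
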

\begin{proof}
Assume for a contradiction that, for some $M$, there is an axial map $\P^{n_1}\times\P^{\N+r-1}\to\P^{M+r-1}$ but that the non-stably parallelizable $\ppn$ does not immerse in $\mathbb{R}^M$. Without loss of generality we can assume $M=\imm(\ppn)-1>\N$. Then,~\cite[Theorem~3.4]{davispps} gives
$$
M-\N=\imm(\ppn)-\N-1=\gd(-(\N+r)\xi_{n_1})-1\geq\left\lceil{\frac{n_1+1}2}\right\rceil,
$$
which amounts to having the arithmetical hypothesis in~(\ref{ms}).\end{proof}

\begin{nota}\label{aclara}
In the same line of reasoning as in Example~$\ref{kee}$, it follows from~\cite{adams} that, for any large $n_1$, there are instances of spaces $\P_{(n_1,\ldots,n_r)}$ for which the hypothesis in Proposition~$\ref{agj}$ fails.
\end{nota}

It is worth mentioning that, for $n_1\leq 9$, the arithmetical hypothesis in~(\ref{ms}) above is superfluous\footnote{Kee Lam has brought the author's attention that the smallest case where the arithmetical hypothesis in~(\ref{ms}) is actually needed takes place when $n_1=10$. 
%Here is Kee's example, in his own words: 
%``The tables, however, do show that the really "lowest" example should be with
%20H over RP^10. It goes as follows:
%        (i) Over RP^14, 20H has 12 independent sections.
%        (ii) Hence there is an axial map  RP^14 x RP^11-------->RP^19. By restriction then, we know       RP^14 x RP^10 -------> RP^19   exists axially.
%        (iii) On the other hand 20H over RP^10 cannot have 15 independent sections!!''
}.
As in the proof of Proposition~\ref{agj}, such an assertion can be verified by checking that, in the indicated range, there is no axial map $\P^{n_1}\times\P^{\N+r-1}\to\P^{M+r-1}$ with $M=\imm(\ppn)-1>\N$. Indeed, under the current hypothesis, such an axial map is prevented by the relation
\begin{equation}\label{hopf}
(x+y)^{M+r}\neq0
\end{equation}
where $x$ and $y$ denote respectively the generators of the mod~$2$ cohomology groups $H^1(\P^{n_1};\mathbb{Z}_2)$ and $H^1(\P^{\N+r-1};\mathbb{Z}_2)$. Explicitly, the basis element $x^gy^{\N+r-1}\in H^*(\P^{n_1}\times\P^{\N+r-1};\mathbb{Z}_2)$ appears in the expansion of~$(\ref{hopf})$ with coefficient
\begin{equation}\label{bino}
\binom{\N+r+g-1}{g}
\end{equation}
where $g=\gd(-(\N+r)\xi_{n_1})$. But, under the current hypothesis, a direct verification using~\cite{keesectioning} (or, alternatively, Table~$4.4$ and Proposition~$4.5$ in~\cite{davispps}) shows that~$(\ref{bino})$ is odd. For instance, consider the case $n_1=6$, where the assumption that $\ppn$ is not stably parallelizable means $\N+r\not\equiv0\bmod8$. Then,~\cite[Table~4.4]{davispps} gives $g=(6,6,5,4,3,2,1)$ for $\N+r\equiv(1,2,3,4,5,6,7)\bmod8$. So
$$
\binom{\N+r+g-1}{g}\equiv\left(\binom{6}{6},\binom{7}{6},\binom{7}{5},\binom{7}{4},\binom{7}{3},\binom{7}{2},\binom{7}{1}\right)\equiv1\;\bmod2.
$$

%\begin{proposicion}\label{tresconmetaestabilidad}
%The following statements are equivalent provided $\ppn$ is not stably parallelizable and~$(\ref{ms})$ holds:
%{\em\begin{enumerate}
%\item {\em There is an immersion $\ppn\immto\mathbb{R}^M$.}
%\item {\em There is a generalized axial map $P((\N+r)\xi_{\overline{n}})\to\P^{M+r-1}$.}
%\item {\em There is an axial map $\ppn\times\P^{\N+r-1}\to\P^{M+r-1}$.}
%\item {\em There is an axial map $\P^{n_1}\times\P^{\N+r-1}\to\P^{M+r-1}$.}
%\end{enumerate}}
%\end{proposicion}
%\begin{proof} The implications (1)$\Rightarrow$(2)$\Rightarrow$(3) follow from (the proof of) Lemma~\ref{immgivesaxi}, whereas the implication (3)$\Rightarrow$(4) holds in view of the inclusion $\P^{n_1}\hookrightarrow\ppn$. None of these implications require either of the two hypothesis. The cycle of implications is closed by using the following facts taken from~\cite{sanderson}: 
%\begin{itemize}
%\item $\mathrm{gd}(\ell\xi_s)\leq k\Leftrightarrow \mathrm{gd}((k-\ell)\xi_s)\leq k$.
%\item $\mathrm{gd}((q+1)\xi_s)\leq q-t\Rightarrow \mbox{there is an axial map }\P^s\times\P^t\to\P^q$.
%\item The above implication is an equivalence provided $s<2(q-t)$.
%\end{itemize}
%Indeed, under the current hypothesis, the existence of an axial map as in~(4) is equivalent to $\hspace{.3mm}\mathrm{gd}((M+r)\xi_{n_1})\leq M-\N$, i.e.~$\hspace{.3mm}\mathrm{gd}(-(\N+r)\xi_{n_1})\leq M-\N$. The latter inequality is~(1) in view of~\cite[Theorem~3.4]{davispps}.
%\end{proof}

\medskip
We close this subsection by remarking that, just as the situation in Example~\ref{kee} for the condition $n_1<2(M-\N)$, the hypothesis that $\ppn$ is not stably parallelizable is also needed in~(\ref{ms}). Yet, the full TC-axial picture is well understood in the stably parallelizable case. In fact, the situation is entirely similar to that in the classical case with $\ell(\overline{n})=1$, where there are well-known axial maps $\P^n\times\P^n\to\P^n$
for $n=1,3,7$, but of course no immersion $\P^n\immto\mathbb{R}^n$. Namely, since the immersion dimension of a stably parallelizable $\ppn$ is $\N+1$, there is an axial map $\P_{\overline{n}}\times\P^{\N+r-1}\to\P^{\N+r}$. But there is a finer (and optimal) axial map
\begin{equation}\label{finer}
\P_{\overline{n}}\times\P^{\N+r-1}\to\P^{\N+r-1}
\end{equation}
(which cannot come from an immersion). Indeed, as shown in~\cite{davispps}, the stable parallelizability of $\ppn$ means that the exponent in the highest $2$-power dividing $\N+r$ is no less than $\phi(n_1)$---the number of positive integers less than or equal to $n_1$ and which are congruent to $0$, $1$, $2$, or $4$ mod $8$. Therefore, classical work of Hurwitz, Radon, and Eckmann on the so-called Hurwitz-Radon matrix equations gives in fact a non-singular bilinear map $\mathbb{R}^{n_1+1}\times\mathbb{R}^{\N+r}\to\mathbb{R}^{\N+r}$ and, in view of Remark~\ref{tcchico}, an axial map of the form~(\ref{finer}). An intriguing possibility is that explicit `linear' formul\ae\ leading to an axial map~(\ref{finer}) could be deduced from a refinement of the Clifford-algebra input in the Hurwitz-Radon number---without relying on Remark~\ref{tcchico}.

\subsection{Non-singular maps}
The existence of axial maps can be translated into the existence of certain non-singular maps. Not only is such a fact a straightforward generalization of the corresponding well-known property for usual projective spaces, but the language of non-singular maps turns out to be irrelevant for the purposes of the paper, since they fail to provide local motion planners as in the classical case. Consequently, these ideas are loosely treated in this subsection, mentioned only for completeness purposes.

\smallskip
There are two closely related notions of non-singular maps associated to an axial map between projective product spaces. In the first one, for an $\ell$-tuple $\overline{q}=(q_1,\ldots,q_\ell)$, we consider the cone $Q_{\overline{q}}$ in $\R^{q_1+1} \times \dots \times \R^{q_\ell+1} $  consisting of tuples $\xx=(x_1, \ldots, x_\ell)$ with $|x_1| = \cdots = |x_\ell|$. Thus, $\p_{\overline{q}}$ is the projectivization of $Q_{\overline{q}}$, i.e.~$\p_{\overline{q}}$ is the subspace of $\P^{|\overline{q}|+\ell-1}$ consisting of the lines contained in $Q_{\overline{q}}$. Then, a continuous map $f:Q_{\overline{n}}\times Q_{\overline{m}} \to \mathbb{R}^{k+1}$ is said to be non-singular if $f(\lambda \overline{x}, \mu \overline{y})= \lambda\mu f(\overline{x},\overline{y})$ for $\lambda,\mu\in\mathbb{R}$, and if the equality $f(\overline{x},\overline{y})=0$ holds only with $\overline{x}=0$ or $\,\overline{y}=0$. With this definition, {\it there is a one-to-one correspondence between the set of non-singular maps $f:Q_{\overline{n}}\times Q_{\overline{m}}\to\mathbb{R}^{k+1}$ (taken up to multiplication by a non-zero scalar) and the set of axial maps $g:\ppn\times\ppm\to\P^k$.} Such a corresponding pair $(f,g)$ fits in a commutative diagram
$$\xymatrix{
Q_{\n} \times Q_{\m} \ar[d]_f &
S_{\n}  \times   S_{\m} \ar[d]_{f'} \ar@{_(->}[l] \ar[r] &
S_{\n}  \times_{\Z_2}   S_{\m} \ar[d]_h \ar[r] &
\p_{\n}  \times   \p_{\m} \ar[d]_g \\
\R^{k+1} &
\R^{k+1} - \{0\} \ar@{_(->}[l] \ar[r]^{\ \ \ \ \ \rho}&
S^k \ar[r]&
\P^k.
}$$
Here the unlabelled horizontal maps facing east are the obvious two fold coverings, $\rho$ is the normalization map $\rho(u)=u/|u|$, $f'$ is the restriction $f|_{S_{\overline{n}}\times S_{\overline{m}}}$, and the right hand square is a pullback (hence $h$ is $\mathbb{Z}_2$-equivariant). Explicitly, given $f$, $g([\overline{x}],[\overline{y}])$ is the line in $\mathbb{R}^{k+1}$ that goes through the origin and $f(\overline{x},\overline{y})$. Conversely, given $g$, pick $h$ as in the diagram above and precompose it with the double covering $S_{\overline{n}}\times S_{\overline{m}}\to S_{\overline{n}}\times_{\mathbb{Z}_2} S_{\overline{m}}$ to get a $\mathbb{Z}_2$-biequivariant map $\tilde{g}:S_{\overline{n}}\times S_{\overline{m}} \to S^{k}$. Then $f$ is the ``bi-radial'' extension of $\tilde{g}$ given by
\begin{equation}\label{alaXico}
f(\xx, \yy) = 
\begin{cases}
\frac{|\xx|}{\sqrt{r}} \frac{|\yy|}{\sqrt{s}} \;  \tilde g\left(\frac{\sqrt{r}}{|\xx|} \,  \xx, \frac{\sqrt{s}}{|\yy|}  \, \yy \right), & 
\text{if $\xx \neq 0$  and $\yy \neq 0;$} \\
0,&  \text{if  $\xx =0$    or $\yy=0$}.
\end{cases}
\end{equation}

Note that if $f:\mathbb{R}^{n_1+1} \times \mathbb{R}^{m_1+1} \to \mathbb{R}^{k+1}$ is a non-singular map (in the usual sense), then for any
$\overline{n}=(n_1,n_2,\ldots,n_r)$ and $\overline{m}=(m_1,m_2,\ldots,m_s)$ a non-singular map $Q_{\overline{n}}\times Q_{\overline{m}}\to\R^{k+1}$ can be defined by
$(\overline{x},\overline{y})\mapsto f(x_1,y_1)$. Of course, this fact is compatible with Remark~$\ref{tcchico}$.

\smallskip
A slight variation of the notion of non-singular maps goes as follows: Set $V_{\overline{t}}=\mathbb{R}^{t_1 +1}\times \cdots \times \mathbb{R}^{t_\ell+1}$. A map $f:V_{\overline{n}}\times V_{\overline{m}} \to \mathbb{R}^{k+1}$ is said to be non-singular if $f(\lambda \overline{x}, \mu \overline{y})= \lambda\mu f(\overline{x},\overline{y})$ for $\lambda,\mu\in\mathbb{R}$, and if the equality $f(\overline{x},\overline{y})=0$ holds only when a coordinate $x_i$ of $\overline{x}$  or a coordinate $y_j$ of $\overline{y}$ vanishes. Then the above considerations apply basically without change, except that~(\ref{alaXico}) takes the slightly more elaborated form
$$
f(\overline{x}, \overline{y}) =
  \begin{cases}
N(\overline{x},\overline{y})\;\tilde{g}\left( \frac{x_1}{|x_1|},\ldots,\frac{x_r}{|x_r|},\frac{y_1}{|y_1|},\ldots,\frac{y_r}{|y_s|}  \right), 
& \text{if no } x_i \text{ nor } y_j\text{ is zero}; \\
   0,      & \text{otherwise},
  \end{cases} 
$$
where $N(\overline{x},\overline{y})=\left(|x_1|\cdots|x_r|\right)^{\frac{1}{r}}\left(|y_1|\cdots|y_s|\right)^{\frac{1}{s}}$.

\section{Topological complexity}\label{SecTC}
%The original motivation for this work was the possibility of extending~$(\ref{classic})$ to an inequality of the form
%\begin{equation}\label{moti}
%\TC(\ppn)\leq\imm(\ppn)+r-1.
%\end{equation}
%Although we do not prove the general case of~(\ref{moti}), we discuss improvements of such a bound in particular situations.
%\medskip

In this section we give several general estimates for $\TC(\ppn)$. We find that $\TC(\ppn) < \dim(\ppn)$ in certain cases, indicating that a simple relation to immersion dimension such as~(\ref{classic}) does not hold for these manifolds. We also compute the exact value of $\TC(\ppn)$ in many cases (Proposition~\ref{analogo}), and give evidence toward the appealing possibility that $\TC(\ppn)$ would depend mostly on $\TC(\P^{n_1})$ and $\ell(\overline{n})$.

\smallskip
Let $\overline{\infty}$ stand for the $r$-tuple $(\infty,\ldots,\infty)$, and let $\P_{\overline{\infty}}$ denote the quotient of $\prod_{r}S^\infty$ by the diagonal action of $\mathbb{Z}_2$ (with the antipodal action on each factor). Note that $\P_{\overline{\infty}}$ is an Eilenberg-MacLane space $K(\mathbb{Z}_2,1)$ containing $\ppn$.

\begin{lema}\label{cells}
There is a CW decomposition for $\P_{\overline{\infty}}$ whose $n_1$-skeleton is contained in $\ppn$.
\end{lema}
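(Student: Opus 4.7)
The plan is to construct the CW structure on $\P_{\overline{\infty}}$ as the quotient of a $\mathbb{Z}_2$-equivariant product CW structure on $\prod_r S^\infty$. First, I would give each factor $S^\infty$ the standard minimal $\mathbb{Z}_2$-equivariant CW decomposition, with exactly two $k$-cells in each non-negative dimension $k$ interchanged by the antipodal involution, so that the $k$-skeleton coincides with $S^k\subset S^\infty$. This is of course the CW structure used to describe the standard cell decomposition of $\P^\infty$.

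Next, equip $\prod_r S^\infty$ with the resulting product CW structure. Since the diagonal $\mathbb{Z}_2$-action permutes the product cells $e^{\epsilon_1}_{i_1}\times\cdots\times e^{\epsilon_r}_{i_r}$ by flipping all of the signs $\epsilon_j\in\{+,-\}$ simultaneously, it is cellular and free, with every orbit consisting of exactly two cells. Therefore the quotient map $\prod_r S^\infty\to\P_{\overline{\infty}}$ endows $\P_{\overline{\infty}}$ with a CW structure whose (open) cells are in bijective correspondence with the $\mathbb{Z}_2$-orbits of open cells of $\prod_r S^\infty$, and where a representative product cell $e_{i_1}\times\cdots\times e_{i_r}$ contributes a cell of dimension $i_1+\cdots+i_r$.

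The key dimensional observation is then immediate: if a cell of $\P_{\overline{\infty}}$ has dimension $d\leq n_1$ and comes from a product cell $e_{i_1}\times\cdots\times e_{i_r}$, then $i_j\leq d\leq n_1\leq n_j$ for every $j$, where we use the crucial assumption $n_1\leq n_2\leq\cdots\leq n_r$. Hence every such product cell lies in $S^{n_1}\times\cdots\times S^{n_r}=S_{\overline{n}}$, so its image lies in $\ppn$. Thus the $n_1$-skeleton of $\P_{\overline{\infty}}$ is contained in $\ppn$, as claimed.

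There is no serious obstacle here; the only point requiring a little care is verifying that the induced decomposition on $\P_{\overline{\infty}}$ really is a CW structure, which is the standard fact that a free cellular action of a finite group on a CW complex descends to a CW structure on the orbit space. The monotonicity hypothesis $n_1\leq\cdots\leq n_r$ is exactly what makes the dimensional bookkeeping work, since it forces every individual coordinate cell dimension to be at most $n_1$, and hence to sit inside the corresponding sphere factor $S^{n_j}$.
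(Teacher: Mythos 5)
Your proof is correct and follows essentially the same approach as the paper: equip the product of spheres with the product $\mathbb{Z}_2$-equivariant CW structure, quotient by the free diagonal action, and use the monotonicity $n_1\leq\cdots\leq n_r$ to see that any cell of total dimension at most $n_1$ lies in $S_{\overline{n}}$. The only cosmetic difference is that the paper builds the CW structure on $\P_{\overline{\infty}}$ as a direct limit of the finite stages $\P_{\overline{m}}$ under the natural inclusions, whereas you work directly with $\prod_r S^\infty$; both arguments hinge on the same dimensional bookkeeping, which you make slightly more explicit than the paper does.
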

\begin{proof}
Let $e^0_+\cup e^0_-\cup\cdots\cup e^m_+\cup e^m_-$ be the usual $\mathbb{Z}_2$-equivariant cell structure on a sphere $S^m$,  and consider the resulting product structure
\begin{equation}\label{celstructure}
\spn=\bigcup e^{i_1}_\pm\times\cdots\times e^{i_r}_\pm.
\end{equation}
If $\tau$ stands for the generator of $\mathbb{Z}_2$, then
a cell structure on $\ppn$ can be formed by identifying a cell $e^{i_1}_\pm\times\cdots\times e^{i_r}_\pm$ in~(\ref{celstructure}) with the corresponding cell $\tau\cdot(e^{i_1}_\pm\times\cdots\times e^{i_r}_\pm)$. If $\ell(\overline{m})=\ell(\overline{n})$ and $n_i\le m_i$, the inclusion $\ppn\hookrightarrow\ppm$ contains the $n_1$-skeleton of $\ppm$. Thus the required cell structure in $\P_{\overline{\infty}}$ is the inductive one under the above inclusions.\end{proof}

We are indebted to Sergey Melikhov for pointing out (in~\cite{Mel}) the  proof of the following fact:

\begin{proposicion}\label{polyhedral_fibers} Let $M^m$ and $N^n$ be closed smooth manifolds, and let $C^\infty(M,N)$ denote the space of smooth maps in the Whitney $C^\infty$-topology. Then for $f\co M\to N$ in a dense subset of $C^\infty(M,N)$, the fibers $f^{-1}(y)$ with $y\in N$ are all polyhedra of dimension $\le \min(m-n,0)$.
\end{proposicion}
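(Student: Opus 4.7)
The plan is to approximate $f$ by a smooth map whose fibers are controlled by an underlying piecewise-linear or real-analytic structure, and then read off the fiber dimensions from that structure (interpreting the bound as $\max(m-n,0)$).

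First, by the Whitehead--Munkres smooth triangulation theorem, I would equip $M$ and $N$ with smooth triangulations $K$ and $L$. After a sufficiently fine subdivision $K'$ of $K$, I would construct a simplicial map $g_0\co K'\to L$ by choosing vertex images in $L^{(0)}$ generically and extending affinely on each simplex in local smooth charts. For a generic vertex assignment, every simplex $\sigma$ of $K'$ maps onto a simplex of $L$ of dimension $\min(\dim\sigma,n)$, so for any $y\in N$ the set $g_0^{-1}(y)$ is a finite union of convex polytopes $(g_0|_\sigma)^{-1}(y)$ of dimension at most $\max(m-n,0)$, one for each open simplex $\sigma$ whose image contains $y$.

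Second, since a genuine simplicial map is only $C^0$, I would upgrade $g_0$ to a $C^\infty$ map $g$ by convolution-smoothing in an arbitrarily small neighborhood of the codimension-one skeleton of $K'$ in local charts. By making this neighborhood thin enough, $g$ would lie in a prescribed Whitney $C^\infty$-neighborhood of $f$, while its fibers inherit a polyhedral structure: agreeing with that of $g_0$ outside the smoothing zone, and analyzed as a small controlled perturbation inside it.

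The main obstacle is precisely that smoothing step: ensuring it does not destroy the polyhedral fiber structure nor inflate fiber dimensions in the thin zone around the skeleton. The cleanest workaround is to bypass triangulations altogether in favor of Grauert's approximation theorem: real-analytic maps $M\to N$ are dense in $C^\infty(M,N)$, so $f$ is $C^\infty$-approximable by a real-analytic $g$; every fiber $g^{-1}(y)$ is then a real-analytic subvariety of $M$, hence triangulable by the Łojasiewicz--Hironaka theorem and in particular a polyhedron; and a generic rank argument --- for a sufficiently generic real-analytic $g$ the locus where $dg$ has rank less than $\min(m,n)$ has positive codimension --- delivers the dimension bound $\dim g^{-1}(y)\le\max(m-n,0)$ at every $y$, combining the bound on the top rank stratum with the usual codimension count on the singular strata.
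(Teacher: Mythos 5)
Your second, real-analytic approach is a genuinely different and in principle viable route to the \emph{polyhedral} part of the conclusion: you replace the paper's combination of the Thom--Mather theorem (openness and density of topologically stable maps) and Verona's triangulation theorem with Grauert approximation and Łojasiewicz triangulability of real-analytic sets. That is a clean alternative, and you also correctly read the stated bound $\min(m-n,0)$ as a misprint for $\max(m-n,0)$. Your first approach (simplicial map plus convolution smoothing) has the gap you yourself flag --- there is no control on the fibers inside the smoothing zone --- and that gap is not cosmetic, so the real-analytic route is the one to pursue.

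The genuine gap is in the dimension bound. Positive codimension of the critical set $\Sigma(g)=\{\operatorname{rank} dg<\min(m,n)\}$ does \emph{not} by itself give $\dim g^{-1}(y)\le\max(m-n,0)$ for all $y$. For a local model, take $g(x,y,z)=(xz,\,yz)$: here $\Sigma(g)=\{z=0\}$ has codimension one, yet $g^{-1}(0,0)$ contains the whole plane $\{z=0\}$, of dimension $2>m-n=1$. What fails is not the size of $\Sigma(g)$ but the behavior of $g$ restricted to it --- in this model $g|_{\Sigma(g)}$ is constant. To control every fiber you must impose genericity on $g$ restricted to each singularity stratum, i.e.\ on the whole Thom--Boardman hierarchy, which is exactly what the multi-jet transversality theorem supplies and what the paper invokes at this step. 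Your phrase ``combining the bound on the top rank stratum with the usual codimension count on the singular strata'' is gesturing at this, but it is the entire content of the step and must be spelled out. Once you do, note also that these transversality conditions give an \emph{open} dense set (not merely a residual one), which is what allows you to intersect them with the dense but non-open set of real-analytic maps and still get a dense set.
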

\begin{proof}
First we note that the set of triangulable maps is dense in $C^\infty(M,N)$. Recall that a smooth map $f\co M\to N$ is {\em triangulable} if there exists a PL map $g\co K\to L$ between PL manifolds, and homeomorphisms $h\co M\to K$ and $h'\co N\to L$ such that $g\circ h = h'\circ f$. By Verona's proof of Thom's triangulation conjecture \cite{Ver}, we know that all proper, topologically stable maps $f\co M\to N$ are triangulable. By the Thom-Mather theorem (a full proof of which appears in \cite{GWPL}), such maps form an open dense subset of $C^\infty(M,N)$.

Next, we note that the fibers $f^{-1}(y)$ of a triangulable map $f\co M\to N$ are all polyhedra (they are homeomorphic to simplicial complexes). For given $y\in N$, we may choose a triangulation $h'\co N\to L$ as above with $h'(y)$ a vertex of $L$. Then $f^{-1}(y)$ is homeomorphic with $g^{-1}(h'(y))$, a subcomplex of $K$. 

Finally, we claim that for $f\co M\to N$ in an open dense subset of the space $C^\infty(M,N)$, the fibers $f^{-1}(y)$ all have covering dimension $\le \min(m-n,0)$. Intersecting this set with the set of proper, topologically stable maps, we find an open dense set of maps whose fibres are all polyhedra of covering dimension $\le \min(m-n,0)$. Since covering dimension is a topological property, this proves the Proposition.

The proof of the final claim follows from the multi-jet transversality theorem \cite{GG}, which implies that for an open dense set of mappings $f\co M\to N$, the fibers $f^{-1}(y)$ all have the structure of a smooth submanifold of $M$ of dimension $\min(m-n,0)$ away from at most finitely many isolated singular points.
\end{proof}

\begin{teorema}\label{main2}
$\TC(\ppn)\leq2\N-n_1+1$ for $\hspace{.3mm}\ell(\overline{n})>1$. On the other hand, the following numbers are equal, giving a lower bound for $\TC(\ppn)\colon$
\begin{itemize}
\item The Schwartz genus of the obvious double cover $S_{\overline{n}}\times_{\mathbb{Z}_2}S_{\overline{n}}\to\ppn\times\ppn$.
\item The smallest integer $L$ for which $(L+1)\xi_{\overline{n}}\otimes\xi_{\overline{n}}$ admits a nowhere zero section.
\item The smallest integer $L$ for which there is an axial map $\ppn\times\ppn\to\P^{L}$.
\item $\TC(\P^{n_1})$
\end{itemize}
\end{teorema}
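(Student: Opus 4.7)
My plan splits into two independent tasks: establishing the string of four equalities together with their status as a lower bound for $\TC(\ppn)$, and proving the upper bound $\TC(\ppn)\le 2\N-n_1+1$.

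The double cover $\spn\times_{\mathbb{Z}_2}\spn\to\ppn\times\ppn$ is the principal $\mathbb{Z}_2$-bundle classified by the map $\ppn\times\ppn\to B\mathbb{Z}_2=\P^\infty$ corresponding to $x\otimes 1+1\otimes x$, since the diagonal $\mathbb{Z}_2$-action on $\spn\times\spn$ is the sum of the two antipodal actions. By Schwarz's classification of the genus of a pullback of the universal principal $\mathbb{Z}_2$-bundle, this genus equals the smallest $L$ for which the classifying map compresses through $\P^L$, which by Definition~\ref{axialchico} is the minimum $L$ admitting an axial map $\ppn\times\ppn\to\P^L$. The nowhere-zero section criterion is the standard translation: $(L+1)\lambda$ has a nowhere-zero section iff the classifying map of the line bundle $\lambda$ factors through $\P^L$ (one direction uses the splitting $(L+1)H\cong\tau_{\P^L}\oplus\varepsilon$ of the $(L+1)$-fold canonical sum on $\P^L$), applied to $\lambda=\xi_{\overline{n}}\otimes\xi_{\overline{n}}$ whose $w_1$ is $x\otimes 1+1\otimes x$. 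Finally, Remark~\ref{tcchico} identifies the axial invariant with that of $\P^{n_1}\times\P^{n_1}$, which is $\TC(\P^{n_1})$ by~\cite{FTY}. The lower bound $\TC(\P^{n_1})\le\TC(\ppn)$ then follows from the retraction $p\circ j=\mathrm{Id}_{\P^{n_1}}$ of~(\ref{hopfpreservados}) together with monotonicity of $\TC$ under retractions; equivalently, any sectional motion planner on $\ppn$ induces a section of the double cover via $\gamma\mapsto[(\tilde\gamma(0),\tilde\gamma(1))]$ for any lift $\tilde\gamma$ to $\spn$, so $\secat$ of the double cover is directly bounded above by $\TC(\ppn)$.

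For the upper bound I would combine the standard inequality $\TC(X)\le 2\cat(X)$ (from $\TC(X)\le\cat(X\times X)\le 2\cat(X)$) with the estimate $\cat(\ppn)\le n_1+(r-1)$ for the sectioned fiber bundle $\spm\to\ppn\to\P^{n_1}$ (with $\overline{m}=(n_2,\ldots,n_r)$, $\cat(\P^{n_1})=n_1$, $\cat(\spm)=r-1$, section $j$) via the subadditivity of $\cat$ for fibrations admitting a section. This yields $\TC(\ppn)\le 2(n_1+r-1)$; using $\N\ge rn_1$ (which follows from $n_i\ge n_1$), the elementary inequality $2(n_1+r-1)\le 2\N-n_1+1$ reduces to $(2r-3)n_1\ge 2r-3$, valid whenever $n_1\ge 1$ and $r\ge 2$. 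The crux here is justifying $\cat(\ppn)\le n_1+r-1$ for this non-trivial bundle: I would first appeal to a general subadditivity theorem for sectioned fibrations, and, failing that, construct an explicit cover of $\ppn$ by $n_1+r$ categorical open sets using the cellular decomposition of Lemma~\ref{cells} together with the polyhedral fibers furnished by Proposition~\ref{polyhedral_fibers}.
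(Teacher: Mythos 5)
Your treatment of the four equalities and the lower bound is essentially the same as the paper's: reduce to $\P^{n_1}$ via Remark~\ref{tcchico} and~(\ref{hopfpreservados}), then quote~\cite{FTY}. (The paper cites~\cite[Theorem~6.1]{FTY} directly rather than re-deriving Schwarz's compression criterion and the nowhere-zero-section translation, but the content is the same.) That half is fine.

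The upper bound, however, has a genuine gap. Your argument hinges on the claim $\cat(\ppn)\le n_1+\ell(\overline{n})-1$, which you justify by appealing to ``a general subadditivity theorem for sectioned fibrations,'' i.e.\ $\cat(E)\le\cat(B)+\cat(F)$ when $F\to E\to B$ admits a section. No such theorem exists in that generality; the standard estimate for fibrations is the multiplicative one, $\cat(E)+1\le(\cat(F)+1)(\cat(B)+1)$, and a section alone does not upgrade it to additivity. (Additivity is essentially a product phenomenon; the torus and other split cases you might test all factor as honest products.) Your proposed bound would be dramatically stronger than anything the paper proves: for example for $\overline{n}=(2,100)$ you would get $\cat\le 3$ and hence $\TC\le 6$, whereas the paper's own LS-category estimate~(\ref{cates}) gives only $\cat(\ppn)<(n_1+1)\ell(\overline{n})$, and even the sharper Theorem~\ref{cota1} gives $\TC<12$. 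If a one-line subadditivity argument gave $\cat(\ppn)\le n_1+\ell(\overline{n})-1$, the authors would not have settled for~(\ref{cates}). Your fallback (``construct an explicit cover using Lemma~\ref{cells} and Proposition~\ref{polyhedral_fibers}'') does not obviously produce categorical sets either: Lemma~\ref{cells} concerns the CW structure of $\P_{\overline{\infty}}$, and Proposition~\ref{polyhedral_fibers} is about generic fibers of smooth maps, neither of which furnishes a cover of $\ppn$ by $n_1+\ell(\overline{n})$ sets contractible in $\ppn$. The paper's actual route is different in kind: it fixes an axial map $q\colon\ppn\times\ppn\to\P^{L}$ with $L=2\N-n_1+1$ (available by Remark~\ref{tcchico} and~(\ref{classic})), perturbs $q$ to be smooth with polyhedral fibers of dimension $\le n_1-1$ via Proposition~\ref{polyhedral_fibers}, checks using Lemma~\ref{cells} that the two restrictions~(\ref{hoty}) agree, and then invokes~\cite[Lemma~2.5 and Theorem~4.3]{grant}. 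You would need to either supply a genuine proof of your $\cat$ estimate (which would be a new result) or switch to an argument along the paper's lines.
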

\begin{proof}
It follows from Remark~\ref{tcchico} and the first two conditions in~(\ref{hopfpreservados}) that the number described in each of the first three items does not change if $\overline{n}$ is replaced by $n_1$ (for the first item we use the fact that the indicated double cover is the sphere bundle associated to $\xpn\otimes\xpn$). Therefore, the equality of the four listed numbers follows from~\cite[Theorem~6.1]{FTY}. The fact that they give a lower bound for $\TC(\ppn)$ follows from the third condition in~(\ref{hopfpreservados}) and the behavior of $\TC$ under retracts.

\smallskip
%It is not clear whether the ideas in~\cite{FTY} can be adapted to prove~(\ref{moti}), but the 
We use the argument in~\cite[Corollary~4.5]{grant} (which is inspired in turn by~\cite{ow}) to prove the upper bound in this theorem. Set $L=2\N-n_1+1$. By~(\ref{classic}) and Remark~\ref{tcchico}, we can chose an axial map $q\colon\ppn\times\ppn\to\P^L$. Since the axial condition is homotopical, we can assume first that $q$ is smooth and then, by Proposition~\ref{polyhedral_fibers}, that for each $z\in\P^L$ the inverse image $q^{-1}(z)$ is homeomorphic to a CW complex of dimension at most $n_1-1$. Then, the axiality of $q$ implies that the image of the class $x$ in~(\ref{xsgrandesrestringentrivial}) under the composite
\begin{equation}\label{hoty}
q^{-1}(z)\hookrightarrow \P_{\overline{n}} \times \P_{\overline{n}} \stackrel{\pi_i\;\,}\to \P_{\overline{n}}
\end{equation}
is independent of the projection $\pi_i\colon\P_{\overline{n}} \times \P_{\overline{n}} \to \P_{\overline{n}}$ ($i=1,2$) used. In fact, Lemma~\ref{cells} and the dimensionality assumption on $q^{-1}(z)$ imply that the actual homotopy type of~(\ref{hoty}) is independent of $i$. The result then follows from Lemma~2.5 and Theorem~4.3 in~\cite{grant}.
\end{proof}

Of course, part of the argument for the lower bound in Theorem~\ref{main2} actually yields $\TC(\P^{n_1})\leq\TC(\P_{(n_1,n_2)})\leq\cdots\leq\TC(\P_{(n_1,\ldots,n_{r-1})})\leq\TC(\ppn)$.
On the other hand, the argument proving the upper bound uses and corrects the proof of~\cite[Corollary~4.5]{grant} which, instead of using Proposition~\ref{polyhedral_fibers}, is based on an assertion about approximating axial maps by submersions. But such a claim is false in general, as illustrated next.

\begin{ejemplo}\label{mark_example}
Since $\P^2\looparrowright \mathbb{R}^3$, there exists an axial map $q\co\P^2\times\P^2\to\P^3$. Note that $2<3<2\cdot 2 =4$. However, $q$ is not homotopic to a submersion. In fact, there does not exist {\em any} submersion $\P^2\times\P^2\to\P^3$, by the following easy argument involving Stiefel-Whitney classes: Suppose $g\co\P^2\times\P^2\to\P^3$ is a submersion. Then we obtain the short exact sequence of vector bundles over $\P^2\times \P^2$
$$
0 \to E \to T(\P^2\times\P^2) \stackrel{dg\,}{\longrightarrow} g^*T(\P^3) \to 0
$$
where the kernel $E$ is a real line bundle. It then follows that $$w(\P^2\times\P^2) = w(E)\hspace{.6mm}g^*w(\P^3) = w(E)$$ $($the latter equality since $\P^3$ is parallelizable$)$. But this is impossible since, for example, $w_2(\P^2\times\P^2)\neq 0$.
\end{ejemplo}

\begin{nota}\label{CV}
It is possible to prove the upper bound in Theorem~$\ref{main2}$ by applying~\cite[Theorem~3]{CV} to an axial map $\ppn\times\ppn\to\P^{2\N-n_1+1}$, and noticing that the canonical inclusion $\ppn\hookrightarrow\P^{2\N-n_1+1}$ is an $n_1$-equivalence. We have chosen the approach in~\cite{grant} due to the intrinsic interest of Proposition~$\ref{polyhedral_fibers}$.
\end{nota}

\begin{nota}\label{lasobs}
The standard upper bound $\TC\leq2\dim$ means that, 
in general, there are up to twice $\dim(X)$ classical homotopy obstructions to consider when bounding $\TC(X)$ from above. For instance, the first top two are central in~\cite{CF}, with the very top one being critical for Costa-Farber's applications---the next-to-the-top one comes for free from~\cite{berstein}. Thus, the upper bound in Theorem~$\ref{main2}$ is already taking care of the first $n_1-1$ of these obstructions for $X=\ppn$.
\end{nota}

The lower bound in Theorem~\ref{main2} is rather crude, as it ignores information coming from $S^{n_2}\times\cdots\times S^{n_r}$. For
%cup-length considerations using~(\ref{mod2cohring}) easily yield the second unequality in \begin{equation}\label{easilygiven} \TC(\ppn)\geq\cat(\ppn)\geq n_1+\ell(\overline{n})-1\end{equation}
%which improves on the lower bound in Theorem~\ref{main2} if $\ell(\overline{n})\gg0$. 
instance,~\cite[Theorem~4.5]{FTY},~(\ref{mod2cohring}), and `zero-divisors' cup-length ($\mathrm{zcl}$) considerations (as defined in~\cite{farberTCplanning}) easily yield
\begin{equation}\label{enriquesbound}
\TC(\ppn)\geq2^{e+1}+\ell({\overline{n}})-2 \mbox{ provided }n_1\geq 2^e,
\end{equation}
which improves by arbitrarily large amount the lower bound in Theorem~\ref{main2} when $\ell(\overline{n})\gg0$. 
%Proposition~\ref{compromiso} below partially inserts the $\ell(\overline{n})$-feature of~(\ref{enriquesbound}) into the lower bound in Theorem~\ref{main2}.
%
%\begin{lema}[Theorems~2.1 and 2.4 in~\cite{GTV}]\label{gtv}
%Let $BP$ stand for the Brown-Peterson spectrum at the prime $2$. If all the $n_i$ are odd, then there is an isomorphism of graded $BP^*$-algebras
%$$BP^*(\ppn)\cong BP^*(\P^{n_1})\otimes\Lambda_{BP^*}[x_2,\dots,x_r]$$ 
%where $\dim(x_i) = n_i\hspace{.1mm}$ for $\hspace{.1mm}i=2,\dots,r$.
%\end{lema}
%
%\begin{proposicion}\label{compromiso}
%...
%\end{proposicion}
%Although~(\ref{enriquesbound}) improves on the lower bound in Theorem~\ref{main2} for $\ell(\overline{n})\gg0$, 
On the other hand, the general philosophy behind~(\ref{classic}) implies that the lower bound in Theorem~\ref{main2} can be much stronger than that in~(\ref{enriquesbound}) if $\ell(\overline{n})=2$. For instance~\cite{james} gives
\begin{equation}\label{jassint}
\TC(\P_{(2^e-1,2^e-1)})\geq\TC(\P^{2^e-1})\geq2^{e+1}-2e-(2,1,1,3)
\end{equation}
provided $e\equiv(0,1,2,3)\bmod4$, a bound which is almost twice that in~(\ref{enriquesbound}). Of course, further results of this sort can be deduced from our current knowledge of the immersion dimension of (usual) real projective spaces. In view of~\cite[Theorems~2.1 and 2.4]{GTV}, it should be possible to use zcl-considerations based on {\it generalized} cohomology theories in order to insert the nice $\ell(\overline{n})$-feature of~(\ref{enriquesbound}) into the lower bound in Theorem~\ref{main2}, thus merging the corresponding strengths of~(\ref{enriquesbound}) and~(\ref{jassint}) into a single lower bound (we hope to explore such a possibility elsewhere). 

\medskip
More interesting is the fact that $\TC(\ppn)$ can be arbitrarily smaller than the dimension of $\ppn$. The simplest of such situations originates from the subadditivity of TC~(\cite{farberTCplanning}), as $\TC(\ppn)-\TC(\ppm)\leq2$ whenever $\TC(\ppn)\approx\ppm\times S^{n_i}$ (the latter decomposition is characterized arithmetically in~\cite[Theorem~2.20]{davispps}). As an extreme situation consider the following partial analogue of~\cite[(2.21)]{davispps}:

\begin{proposicion}\label{analogo}
Let $\phi(n_1)$ be the number of positive integers equal to or less than $n$ which are congruent to $0$, $1$, $2$, or $4$ (mod $8$). If $\nu(n_i + 1)\geq\phi(n_1)$ for all $i>1$, then $$\mathrm{zcl}_{\mathbb{Z}_2}(\P^{n_1})+\ell(\overline{n})-1\leq\TC(\ppn)\leq\TC(\P^{n_1})+\ell(\overline{n})-1.$$ Further, both inequalities above become equalities precisely for $n_1$ a $2$-power. 
\end{proposicion}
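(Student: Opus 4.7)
The plan splits the proof into the two inequalities and the equality case; the arithmetic hypothesis is needed only for the upper bound.

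For the upper bound, my plan is to iterate~\cite[Thm.~2.20]{davispps}: its arithmetic hypothesis for peeling off a single $S^{n_i}$-factor from $\ppn$ is exactly $\nu(n_i+1)\ge\phi(n_1)$, so iterating on $i=r,\dots,2$ produces a homeomorphism $\ppn\cong\P^{n_1}\times S^{n_2}\times\cdots\times S^{n_r}$. Since $\phi(n_1)\ge 1$, each $n_i$ with $i>1$ is odd and $\TC(S^{n_i})=1$, so the subadditivity of $\TC$ (see~\cite{farberTCplanning}) yields $\TC(\ppn)\le\TC(\P^{n_1})+\ell(\overline{n})-1$.

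For the lower bound my plan is to apply the standard zero-divisor cup-length estimate $\TC(\ppn)\ge\mathrm{zcl}_{\mathbb{Z}_2}(\ppn)$ and exhibit an explicit non-vanishing product of the required length. With $D=\mathrm{zcl}_{\mathbb{Z}_2}(\P^{n_1})$ and $\bar{y}=y\otimes1+1\otimes y$, I would show
$$\omega\;=\;\bar{x}^{D}\,\bar{x}_2\bar{x}_3\cdots\bar{x}_r\;\neq\;0\quad\text{in}\quad H^*(\ppn\times\ppn;\mathbb{Z}_2).$$
Expanding $\bar{x}_2\cdots\bar{x}_r$ as a sum over subsets $T\subseteq\{2,\dots,r\}$, the pieces of $\omega$ are distinguished in~(\ref{mod2cohring}) by the exterior monomials they carry in each tensor factor, so these pieces cannot cancel, and non-vanishing of $\omega$ reduces to non-vanishing of one of them, say the one indexed by $T=\{2,\dots,r\}$: the polynomial $\sum_k\binom{D}{k}\,(x^k x_2\cdots x_r)\otimes x^{D-k}$. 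By the definition of $D$ there is some $k$ with $k,\,D-k\le n_1$ and $\binom{D}{k}$ odd, which delivers a genuinely non-zero basis term.

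The equality assertion then follows from the observation that the two bounds coincide precisely when $\mathrm{zcl}_{\mathbb{Z}_2}(\P^{n_1})=\TC(\P^{n_1})$, which is a classical consequence of~\cite{FTY} and characterises $n_1$ as a power of $2$. The main obstacle will be the bookkeeping in the lower bound: one must verify the non-vanishing of $\omega$ while keeping in mind the exceptional multiplicative relation $x_i^2=x^{n_1}x_i$ of~(\ref{mod2cohring}) (active when $n_1$ is even and some $n_i=n_1$). Since no $x_i$ is ever squared in $\omega$, however, this exception remains inert throughout the computation.
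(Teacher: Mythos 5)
Your proposal takes essentially the same approach as the paper: the paper's own proof is a two-line citation of~(\ref{enriquesbound}) for the lower bound and of~\cite[Theorem~2.20]{davispps} for the upper bound, and your plan simply fills in the arguments underlying those citations. Your explicit $\mathrm{zcl}$ computation of $\bar{x}^{D}\bar{x}_2\cdots\bar{x}_r\neq 0$ (including the correct observation that the exceptional relation $x_i^2=x^{n_1}x_i$ in~(\ref{mod2cohring}) never fires because no $x_i$ is squared) is precisely the derivation behind~(\ref{enriquesbound}); and iterating~\cite[Theorem~2.20]{davispps} to get $\ppn\cong\P^{n_1}\times S^{n_2}\times\cdots\times S^{n_r}$, noting that $\phi(n_1)\geq 1$ forces the $n_i$ ($i>1$) to be odd so each $\TC(S^{n_i})=1$, and invoking TC-subadditivity is exactly the content behind the paper's second citation.

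One caveat, which affects you and the paper equally since you reproduced the paper's reasoning faithfully: the ``classical consequence'' you cite — that $\TC(\P^{n_1})=\mathrm{zcl}_{\Z_2}(\P^{n_1})$ holds precisely when $n_1$ is a power of $2$ — is the same assertion the paper calls a ``standard fact,'' and it does not appear to be accurate as stated. For instance, $\P^3$ is parallelizable, so $\TC(\P^3)=3=\mathrm{zcl}_{\Z_2}(\P^3)$ even though $3$ is not a $2$-power; taking $\overline{n}=(3,3)$ satisfies the hypothesis ($\nu(4)=2\geq\phi(3)=2$) and makes both inequalities of the proposition equalities. Similar phenomena occur for $n_1\in\{5,6,7,9\}$. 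This is a shortcoming inherited from the paper rather than a gap in your blind plan, but it is worth being aware that the equality characterization is not correct in the form stated.
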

\begin{proof}
The first inequality is~(\ref{enriquesbound}); the second inequality follows from~\cite[Theorem~2.20]{davispps}. The final assertion follows from the standard fact that $\TC(\P^{n_1})=\mathrm{zcl}_{\mathbb{Z}_2}(\P^{n_1})$ precisely for $n_1$ a 2-power.
\end{proof}

Proposition~\ref{analogo} suggests the possibility that $\TC(\ppn)$ can be estimated for any $\overline{n}$ in terms of $\TC(\P^{n_1})$ and $\ell(\overline{n})$ alone. Theorem~\ref{cota1} below (whose proof is postponed to the next section) fits into such a general philosophy, and shows that  the low TC-phenomenon in Proposition~\ref{analogo} holds even if there are no spheres factoring out $\ppn$.

\begin{teorema}\label{cota1}
If $k$ denotes the number of spheres $S^{n_i}$ with $n_i$ even and $i>1$, then $\TC(\ppn)<(\TC(\P^{n_1})+1)(\ell(\overline{n})+k)$.
\end{teorema}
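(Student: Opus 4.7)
The plan is to establish a Schwarz-genus product inequality of the form
$$
\TC(\ppn)\le(\TC(\P^{n_1})+1)(\TC(F)+1)-1,
$$
where $F=S^{n_2}\times\cdots\times S^{n_r}$ is the fiber of the projection $p\co\ppn\to\P^{n_1}$. Because $\TC(S^m)=1$ when $m$ is odd and $\TC(S^m)=2$ when $m$ is even, subadditivity of $\TC$ together with the matching cup-length lower bound for products of spheres gives $\TC(F)=(r-1)+k$ where $r=\ell(\overline{n})$, so that $\TC(F)+1=r+k$. The displayed bound then yields the strict inequality of the theorem.

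To produce this bound, I would construct an explicit open cover of $\ppn\times\ppn$ of size $(\TC(\P^{n_1})+1)(\TC(F)+1)$ on which the double evaluation admits continuous sections, by combining a base cover with a fiber cover. Start with a motion-planning cover $\{U_\alpha\}_{\alpha=0}^{\TC(\P^{n_1})}$ of $\P^{n_1}\times\P^{n_1}$ with planners $\sigma_\alpha\co U_\alpha\to(\P^{n_1})^I$, and pull back along $p\times p$ to an open cover $\{\tilde U_\alpha\}$ of $\ppn\times\ppn$. Over each $\tilde U_\alpha$, use the path-lifting property of the fiber bundle $p$ to turn $\sigma_\alpha$ into a continuous assignment carrying $(e_1,e_2)$ to a path in $\ppn$ from $e_1$ to a point $e_1'\in p^{-1}(p(e_2))$. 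Refining each $\tilde U_\alpha$ by a motion-planning cover of $F\times F$ of size $\TC(F)+1$ and concatenating the base-lifted path with the fiber path gives the desired product cover.

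The main obstacle, and the step where the equivariant TC apparatus alluded to in the abstract enters, is making the fiber refinement compatible with the structure group $\mathbb Z/2$ of $p$, which acts antipodally on each sphere factor of $F$. A naive motion-planning cover on $F\times F$ need not descend along the associated bundle $\ppn\times_{\P^{n_1}}\ppn$, so one must either choose the fiber planners so that they are compatible with the diagonal antipodal action, or pass to the Borel construction $\spn\to\ppn$ and exploit a refined form of equivariant subadditivity adapted to this particular product-of-spheres fiber. Once this compatibility is in hand, concatenation of base and fiber paths on each product piece yields the motion planners, establishing the displayed bound and hence the strict inequality $\TC(\ppn)<(\TC(\P^{n_1})+1)(\ell(\overline{n})+k)$.
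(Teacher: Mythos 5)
Your high-level plan matches the paper's: view $\ppn$ as the Borel construction $S^{n_1}\times_{\mathbb{Z}_2}S_{\overline{m}}$ (with $\overline{m}=(n_2,\ldots,n_r)$) and derive a product-type upper bound over $\P^{n_1}$ with fiber $S_{\overline{m}}$. You also correctly identify the central obstruction — the fiber motion planners must be compatible with the diagonal antipodal $\mathbb{Z}_2$-action, or the cover on $F\times F$ will not descend to the associated bundle. However, you stop exactly there: the phrase ``once this compatibility is in hand'' leaves the entire content of the argument unproven. Your count $\TC(F)=(r-1)+k$ uses the \emph{non-equivariant} complexity of spheres, but the bound you want requires the \emph{equivariant} complexity $\TC_{\mathbb{Z}_2}(S_{\overline{m}})$, and there is no a priori reason these agree. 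In the paper the gap is filled by three ingredients that your sketch neither proves nor invokes: (i) an explicit construction of $\mathbb{Z}_2$-equivariant motion planners on $S^n$ with the antipodal action, showing $\TC_{\mathbb{Z}_2}(S^n)=1$ for $n$ odd and $2$ for $n$ even — this uses equivariant nowhere-zero vector fields pulled back from $\P^n$ (or from $\P^n$ minus a point), and is not merely a restatement of the classical sphere computation; (ii) an equivariant product inequality $\TC_G(X\times Y)\le\TC_G(X)+\TC_G(Y)$, whose proof requires $G$-invariant partitions of unity; and (iii) the Colman--Grant theorem $\TC(X_G)<(\TC_G(X)+1)(\TC(B)+1)$ for $X_G=E\times_G X$ the Borel space of a numerable principal $G$-bundle $E\to B$. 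Your plan to rebuild the cover of $\ppn\times\ppn$ by concatenating lifted base paths with fiber paths is in effect a hands-on reproof of (iii), which is fine, but the construction still hinges on having $G$-invariant fiber planners of the right size, and that is precisely (i) and (ii).

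So the route you chose is the right one and is the paper's, but as written the proposal has a genuine gap: the equality $\TC_{\mathbb{Z}_2}(S_{\overline{m}})=(r-1)+k$ is asserted via non-equivariant subadditivity and cup-length, neither of which applies to the equivariant invariant. You would need to prove the equivariant sphere computation directly (by constructing $\mathbb{Z}_2$-equivariant covers and sections on $S^n\times S^n$), establish the equivariant subadditivity, and then combine them with the Borel-space estimate. Without those steps the displayed bound is not justified.
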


The upper bound in Theorem~$\ref{cota1}$ will be much lower than the dimension of $\ppn$ provided the sum $n_2+\cdots+n_r$ is large enough---which can hold even if there are no spheres $S^{n_i}$ factoring out $\ppn$. Thus, in such cases, most of the homotopy obstructions in the motion planning problem for $\ppn$ already vanish. It is worth noticing that $\TC(\ppn)$ is not always less than $\dim(\ppn){:}$ if $1^r$ stands for the $r$-tuple $(1,\ldots,1)$, then $\TC(\P_{1^r})=\dim(\P_{1^r})$, in view of Proposition~$\ref{analogo}$. On the other hand, the upper bound in Theorem~\ref{cota1} not always improves that in Theorem~\ref{main2}. For instance, in the case of $\P_{(2^e,2^e)}$, the former bound is $6\cdot2^e$ while the latter one is only $3\cdot2^e+1$. 

%\red{Enrique and I observed that the use of zcl$_\mathbb{Z}$ could potentially improve the case of $\P_{(2^e,2^e)}$ in Corollary~\ref{casicasi}. We should check it (using Davis's paper). The interest here is the possibility that the upper bound in ~(\ref{enriquesbound}) and Theorem~\ref{main2} could indeed be optimal. On the other (opposite!) hand, the observation that $\cat(\P_{(2,2)})=3$ (which follows from~(\ref{easilygiven}) and Remark~\ref{aclaracion}) could be used as (a rather indirect) evidence towards the non-optimality of the upper bound in Theorem~\ref{main2}.}\medskip
%
%\bigskip\bigskip
%\red{Miguel, Enrique, some things still pending:
%\begin{enumerate}
%Enrique suggested that we could analyze the weak (cat and) TC of pps's. It might be a doable project (which would add a lot of interest to this paper) trying to show that, just as in the case of usual real projective spaces~(cf.~\cite{CV,CV2}), $\TC(\ppn)$ agrees with weak-$\TC(\ppn)$.
%\end{enumerate}}

\section{Equivariant topological complexity}
In a recent paper \cite{CG} Hellen Colman and the second author explore an equivariant generalization of topological complexity, in the setting of compact group actions. Here we give additional examples and results which will be useful in applying their results to the estimation of topological complexity of projective product spaces.

\smallskip
Let $G$ be a compact Hausdorff topological group (in our present applications $G$ will be the cyclic group $\Z_2$). If $p\co E\to B$ is a $G$-map, the {\em equivariant sectional category} of $p$, denoted $\secat_G(p)$, is defined in \cite[Section 5]{CG} to be the least integer $k$ such that $B$ may be covered by $k$ invariant open sets $U_1,\ldots , U_k$ on each of which there exists a $G$-homotopy section, that is a $G$-map $s\co U_i \to E$ such that $p\circ s$ is $G$-homotopic to the inclusion $i_{U_i}\co U_i\hookrightarrow B$. If $p$ is a $G$-fibration, then this is equivalent to requiring the existence of a $G$-section $s\co U_i\to E$ such that $p\circ s = i_{U_i}$.

\smallskip
In particular, for any $G$-space $X$ the {\em equivariant topological complexity} of $X$ is defined in \cite[Section 6]{CG} to be the equivariant sectional category of the double evaluation map $X^{[0,1]}\to X\times X$. Here $G$ acts diagonally on the product and by composition on the path space of $X$.

\smallskip
In keeping with the conventions in place in this paper, we will define the equivariant topological complexity to be one less than the number of sets in the open cover; thus
\[
\TC_G(X)=\secat_G(X^{[0,1]}\to X\times X)-1.
\]
\begin{lema}\label{tcequivariante}
Let $G=\Z_2$ act antipodally on the sphere $S^n$, where $n\ge 1$. Then
\[
\TC_G(S^n) = \left\{\begin{array}{ll} 1 & \mbox{if $n$ is odd,} \\
                                      2 & \mbox{if $n$ is even.} \end{array}\right.
\]
\end{lema}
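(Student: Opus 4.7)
The plan is to match lower bounds obtained from the inequality $\TC_G(X) \geq \TC(X)$ (any invariant cover with $G$-homotopy sections is in particular a cover with ordinary homotopy sections) together with the classical Farber values $\TC(S^n) = 1$ for $n$ odd and $\TC(S^n) = 2$ for $n$ even, against upper bounds coming from explicit equivariant motion planners.

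The key ingredient is an equivariant tangent vector field $F$ on $S^n$, meaning $F(-x) = -F(x)$ in the ambient $\R^{n+1}$. For $n$ odd one can take $F$ to be the standard nowhere vanishing vector field arising from the identification $S^n \subset \R^{2k}$; for $n$ even, equivariance forces zeros, and since the $G$-action is free these zeros come in antipodal pairs, so one may choose $F$ with exactly two zeros $\{p,-p\}$. In either case, wherever $F(x)\neq 0$, setting
\[
\sigma_x(t) = \cos(\pi t)\, x + \sin(\pi t)\, F(x)/|F(x)|
\]
gives a semicircular path from $x$ to $-x$ satisfying $\sigma_{-x}(t) = -\sigma_x(t)$.

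For $n$ odd I would cover $S^n\times S^n$ by the two invariant open sets $U_1 = \{(x,y) : y \neq -x\}$, with the shortest-geodesic section (equivariant because the antipodal map is an isometry), and $U_2 = \{(x,y) : y \neq x\}$, with the section that concatenates $\sigma_x$ with the shortest geodesic from $-x$ to $y$; the identity $\sigma_{-x} = -\sigma_x$ and the isometry property make this equivariant, yielding $\TC_G(S^n)\leq 1$. For $n$ even the same $U_1$ works; replace $U_2$ by the invariant open set $U_2' = \{(x,y) : y \neq x,\ x \neq \pm p\}$ with the analogous concatenation section (well defined since $F(x)\neq 0$ on $U_2'$), and add a third invariant set $U_3$ consisting of a small open neighborhood of the free orbit $\{(p,-p),(-p,p)\}$. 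Since that orbit is free, $U_3 = V\sqcup (-V)$ with $V$ a small contractible neighborhood of $(p,-p)$; pick any continuous section on $V$ (for instance, short geodesic $x\to p$, a fixed chosen path $p\to -p$, short geodesic $-p\to y$) and extend to $-V$ by equivariance. A direct case analysis shows $U_1\cup U_2'\cup U_3 = S^n\times S^n$, giving $\TC_G(S^n)\leq 2$.

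The main obstacle is enforcing equivariance near the anti-diagonal: any section over a neighborhood of $(x,-x)$ must choose a ``direction of travel'' at $x$ in a way compatible with the corresponding choice at $-x$. This is exactly what forces the use of an equivariant (rather than arbitrary) tangent vector field; and for $n$ even the unavoidable zero orbit of such an $F$ is what accounts for the extra open set, and thus for the jump from $1$ to $2$.
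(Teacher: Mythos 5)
Your proposal is correct and follows essentially the same strategy as the paper: lower bound from $\TC(S^n)\le\TC_G(S^n)$, upper bound from explicit equivariant motion planners built on an antipodally equivariant vector field (nowhere zero for $n$ odd; zero exactly on an antipodal pair for $n$ even, forcing a third open set near the orbit $\{(p,-p),(-p,p)\}$ in the anti-diagonal). The only cosmetic differences are that the paper obtains the equivariant field by pulling back a nowhere-zero field from $\P^n$ (resp.\ $\P^n\setminus\{[C]\}$) rather than writing it down directly, and your third set $U_3$ is explicitly an open neighbourhood of the orbit, which tidies up a small imprecision in the paper's description of $U_2'$.
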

\begin{proof}
We argue that the usual motion planning rules on the spheres can be made equivariant with respect to the antipodal action, by choosing vector fields which are equivariant.

Suppose $n$ is odd. Then the projective space $\P^n$ has zero Euler characteristic and so admits a nowhere-vanishing vector field. Using the double cover immersion $S^n\to P^n$, this pulls back to a nowhere-vanishing vector field $v$ on $S^n$ which is equivariant in the sense that $dg(v(A)) = v(gA)$ for $g\in G$ and $A\in S^n$. We consider the open sets $U_0=\{(A,B)\in S^n\times S^n\mid A\neq -B\}$ and $U_1 = \{(A,B)\in S^n\times S^n\mid A\neq B\}$. We define $s_0$ on $U_0$ by choosing the shortest geodesic path from $A$ to $B$ (traveled at constant velocity). We define $s_1$ on $U_1$ in two stages: first travel from $A$ to $-A$ along the great circle in the direction determined by $v(A)$; second travel from $-A$ to $B$ along the shortest geodesic path. It is easy to check that these sets and motion planning rules are $G$-invariant.

When $n$ is even, removing a point $[C]$ from $P^n$ gives an open manifold homotopy equivalent to $P^{n-1}$, which therefore admits a nowhere-vanishing vector field. Again we pull this back to obtain a nowhere-vanishing equivariant vector field $v'$ on $S^n-\{-C,C\}$. We let $U_0$ and $s_0$ be as before. We let $U_1'=\{(A,B)\in S^n\times S^n\mid A\neq B, C,-C\}$ and define $s_1'$ using $v'$ similarly to $s_1$. Finally we let $U_2'=\{(A,-A)\mid A\in W\cup -W\}$, where $W$ is a small open disk neighbourhood centred on $C$. The path $s_2'(A,-A)$ for $A\in W$ travels first along the geodesic segment to the centre $C$ of $W$; then along some fixed path $\gamma$ from $C$ to $-C$; then along the geodesic segment in $-W$ to $-A$. For $A\in -W$ the path $s_2'(A,-A)$ travels first along the geodesic segment in $-W$ to $-C$; then along $-\gamma$ from $-C$ to $C$; then along the geodesic segment in $W$ to $-A$.

The lower bounds are given by the obvious inequality $\TC(X)\le\TC_G(X)$, which holds for any $G$-space $X$.
\end{proof}

\begin{teorema}\label{product} Let $G$ be a compact Lie group, and let $X$ and $Y$ be smooth $G$-manifolds. Then
\[
\TC_G(X\times Y)\le \TC_G(X) + \TC_G(Y)
\]
where $X\times Y$ is given the diagonal $G$-action.
\end{teorema}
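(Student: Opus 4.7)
The plan is to adapt Farber's classical proof of subadditivity for ordinary topological complexity to the $G$-equivariant setting, the only substantive change being that one must work throughout with $G$-invariant open sets, $G$-equivariant (homotopy) sections, and $G$-invariant partitions of unity. Set $m=\TC_G(X)$ and $n=\TC_G(Y)$. By definition there exist $G$-invariant open covers $\{U_0,\ldots,U_m\}$ of $X\times X$ and $\{V_0,\ldots,V_n\}$ of $Y\times Y$, together with $G$-equivariant homotopy sections $s_i\co U_i\to X^{[0,1]}$ and $t_j\co V_j\to Y^{[0,1]}$ of the respective double evaluation fibrations. The natural identification $(X\times Y)^{[0,1]}\cong X^{[0,1]}\times Y^{[0,1]}$ is a $G$-homeomorphism (with $G$ acting diagonally on the product and by post-composition on path spaces), so each $s_i\times t_j$ is a $G$-equivariant homotopy section of the path fibration over the $G$-invariant open set $U_i\times V_j$ in $(X\times Y)\times(X\times Y)$.

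The key technical input is the existence of $G$-invariant partitions of unity $\{\phi_i\}$ and $\{\psi_j\}$ subordinate to $\{U_i\}$ and $\{V_j\}$. This is where the hypotheses on $G$ and on $X,Y$ enter: averaging an ordinary partition of unity with respect to Haar measure on the compact Lie group $G$ yields an invariant one, provided $X\times X$ and $Y\times Y$ are paracompact Hausdorff $G$-spaces, which they are, being smooth $G$-manifolds. Using these weights, I would imitate the Schwarz-Farber construction by defining, for each $r\in\{0,1,\ldots,m+n\}$ and each pair $(i,j)$ with $i+j=r$, the $G$-invariant open set
\[
W_{i,j}=\{p=(x,y,x',y'):\phi_i(x,x')\psi_j(y,y')>\phi_{i'}(x,x')\psi_{j'}(y,y')\text{ for all }(i',j')\neq(i,j)\text{ with }i'+j'=r\}.
\]
The sets $\{W_{i,j}\}_{i+j=r}$ are pairwise disjoint by construction, so $W_r=\bigsqcup_{i+j=r}W_{i,j}$ is a $G$-invariant open set on which the assembled map $(s_i\times t_j)|_{W_{i,j}}$ defines a continuous, $G$-equivariant homotopy section of the path fibration.

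The main obstacle I anticipate is verifying that the $W_r$ actually cover $(X\times Y)\times(X\times Y)$: a priori, at a point $p$ the maximum of the products $\phi_i\psi_j$ could be attained simultaneously by several pairs $(i,j)$ sharing the same sum, so that $p$ escapes every $W_{i,j}$. This can be handled by a small $G$-invariant perturbation of the partition-of-unity weights that breaks ties without enlarging supports, or alternatively by first replacing the $\{U_i\}$ and $\{V_j\}$ with Ostrand-style refinements into countable disjoint open families (which can again be arranged $G$-equivariantly by averaging). Once a valid cover is obtained, one has a $G$-invariant open cover of $(X\times Y)\times(X\times Y)$ by $m+n+1$ sets equipped with $G$-equivariant homotopy sections, whence $\TC_G(X\times Y)\leq m+n=\TC_G(X)+\TC_G(Y)$.
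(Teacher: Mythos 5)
Your overall strategy matches the paper's proof exactly: take $G$-invariant open covers with $G$-equivariant sections, produce $G$-invariant partitions of unity (the paper cites \cite[Corollary~B.33]{GGK}, which is proved by precisely the Haar-averaging argument you describe), and then reproduce Farber's non-equivariant product argument from~\cite[Theorem~11]{farberTCplanning}, which is what the paper itself delegates to. So the route is the right one. Two remarks on the detail you supply.

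First, as written your $W_{i,j}$ omits the positivity condition $\phi_i\psi_j>0$; without it, $W_{0,0}$ (and $W_{m,n}$) would be all of $(X\times Y)\times(X\times Y)$, on which $s_0\times t_0$ is not defined. The condition $\phi_i(x,x')\psi_j(y,y')>0$ must be included so that $W_{i,j}\subset U_i\times V_j$, where the section $s_i\times t_j$ makes sense.

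Second, the ``main obstacle'' you anticipate is in fact not there, and the fixes you propose are either unnecessary or would not work. The covering holds on the nose: given $p=(x,y,x',y')$, let $i_0$ be the smallest index with $\phi_{i_0}(x,x')>0$ and $j_0$ the smallest with $\psi_{j_0}(y,y')>0$. Then for any $(i',j')\neq(i_0,j_0)$ with $i'+j'=i_0+j_0$, either $i'<i_0$ or $j'<j_0$, so one of the two factors vanishes and $\phi_{i'}\psi_{j'}(p)=0<\phi_{i_0}\psi_{j_0}(p)$. Hence $p\in W_{i_0,j_0}$, and the $W_r$ cover. There is no need to break ties, and indeed one cannot: a partition of unity on a connected manifold necessarily has ties on a nonempty set, so no perturbation (equivariant or otherwise) can make the products $\phi_i\psi_j$ globally totally ordered. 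The Ostrand-type refinement is likewise not needed here. With the positivity condition added and the covering verified by the minimal-index pair, your argument closes cleanly and gives exactly the paper's proof.
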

\begin{proof}
Let $\TC_G(X)=n$ and $\TC_G(Y)=m$. Suppose that $X\times X = U_0\cup  \cdots \cup U_n$ where the $U_i$ are open invariant sets with $G$-sections $s_i\co U_i\to X^{[0,1]}$. Suppose further that $Y\times Y = V_0\cup  \cdots \cup V_m$ where the $V_j$ are open invariant sets with $G$-sections $\sigma_j\co V_j\to Y^{[0,1]}$. We can find a $G$-invariant partition of unity $\{ f_i\}$ on $X\times X$ subordinate to $\{U_i\}$ (see \cite[Corollary B.33]{GGK}). Likewise let $\{g_j\}$ be a $G$-invariant partition of unity on $Y\times Y$ subordinate to $\{ V_j\}$.

The rest of the proof proceeds by direct analogy with the proof in the non-equivariant case given in \cite[Theorem 11]{farberTCplanning}, hence is omitted.
\end{proof}

\begin{nota} Theorem~$\ref{product}$ is certainly not the most general setting in which the product inequality holds. For instance, we believe it holds whenever $X$ and $Y$ are $G$-ENRs.
\end{nota}

\begin{corolario} Consider the diagonal action of $\hspace{.5mm}\Z_2\hspace{-.3mm}$ on $S_{\overline{n}}=S^{n_1}\times \cdots \times S^{n_r}$. If $k$ denotes the number of spheres with $n_i$ {\em even}, then
\[
\TC_G(S_{\overline{n}}) = \ell(\overline{n}) + k.
\]
\end{corolario}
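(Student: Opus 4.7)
The plan is to match two bounds that each follow in a couple of lines from material already in place. For the upper bound I would iterate Theorem \ref{product} on the decomposition $S_{\overline{n}}=S^{n_1}\times\cdots\times S^{n_r}$ (all factors carrying the antipodal action, so that the diagonal action on the product agrees with the one in the statement), obtaining
\[
\TC_G(S_{\overline{n}})\le\sum_{i=1}^{r}\TC_G(S^{n_i}).
\]
Lemma \ref{tcequivariante} then evaluates each summand to be $1$ when $n_i$ is odd and $2$ when $n_i$ is even, giving a total of $(r-k)\cdot 1+k\cdot 2=r+k=\ell(\overline{n})+k$.

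For the lower bound I would invoke the obvious inequality $\TC(X)\le\TC_G(X)$ already used in the proof of Lemma \ref{tcequivariante} (any equivariant Schwarz-genus cover is in particular a Schwarz-genus cover), which reduces the task to showing $\TC(S_{\overline{n}})\ge\ell(\overline{n})+k$. This is the classical computation of Farber: letting $a_i\in H^{n_i}(S^{n_i};\Z)$ be the fundamental class, the zero-divisor $\bar a_i:=1\otimes a_i-a_i\otimes 1\in H^{n_i}(S^{n_i}\times S^{n_i};\Z)$ satisfies $\bar a_i^{\,2}=0$ when $n_i$ is odd and $\bar a_i^{\,2}=-2\,a_i\otimes a_i\ne 0$ when $n_i$ is even. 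By the Künneth theorem the product
\[
\prod_{n_i\text{ odd}}\bar a_i\;\cdot\prod_{n_i\text{ even}}\bar a_i^{\,2}\in H^{*}(S_{\overline{n}}\times S_{\overline{n}};\Z)
\]
is nonzero and exhibits zero-divisor cup-length $r+k$, so $\TC(S_{\overline{n}})\ge r+k=\ell(\overline{n})+k$.

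There is no real obstacle here; the two bounds meet, giving the asserted equality. The only point worth emphasising is that Theorem \ref{product} applies because each $S^{n_i}$ is a smooth $\mathbb{Z}_2$-manifold, and that the $\TC_G\ge\TC$ inequality is strong enough precisely because the non-equivariant value $\TC(S_{\overline{n}})$ already attains the conjectural $r+k$.
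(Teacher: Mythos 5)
Your argument is correct and is exactly the one the paper intends for this corollary: the upper bound by iterating Theorem~\ref{product} and evaluating each factor via Lemma~\ref{tcequivariante}, and the lower bound by the trivial inequality $\TC\le\TC_G$ combined with Farber's zero-divisor cup-length computation that $\TC(\prod_i S^{n_i})=r+k$. No gaps.
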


The main result we will apply from \cite{CG} gives an upper bound for the (non-equivariant) topological complexity of a Borel fibration in terms of the topological complexity of the base and the equivariant topological complexity of the fibre.

\begin{teorema}[{\cite[Theorem 6.21]{CG}}]\label{CGTCG}
Let $X$ be a $G$-space, and let $E\to B=E/G$ be a numerable principal $G$-bundle. Then
\[
\TC(X_G) < (\TC_G(X)+1)(\TC(B)+1),
\]
where $X_G=E\times_G X$ is the corresponding Borel space of $X$.
\end{teorema}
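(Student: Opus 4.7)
The plan is to construct an open cover of $X_G\times X_G$ by $(\TC(B)+1)(\TC_G(X)+1)$ sets, each admitting a continuous section of the endpoint fibration $X_G^{[0,1]}\to X_G\times X_G$. Set $m=\TC(B)$ and $n=\TC_G(X)$, fix an open cover $\{U_0,\ldots,U_m\}$ of $B\times B$ with (continuous) sections $\sigma_i\colon U_i\to B^{[0,1]}$ of the endpoint map, and fix a $G$-invariant open cover $\{V_0,\ldots,V_n\}$ of $X\times X$ (with the diagonal $G$-action) equipped with $G$-equivariant sections $\tau_j\colon V_j\to X^{[0,1]}$.

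First I would exploit the numerability hypothesis: a numerable principal $G$-bundle is a Hurewicz fibration, and so is the product $E\times E\to B\times B$. Pulling back along $\sigma_i$, I can lift continuously to obtain, over the open set $\widetilde{U}_i\subset E\times E$ that is the preimage of $U_i$, a map $\hat\sigma_i\colon\widetilde{U}_i\to E^{[0,1]}$ with $\hat\sigma_i(e_1,e_2)(0)=e_1$ and $\hat\sigma_i(e_1,e_2)$ covering $\sigma_i([e_1],[e_2])$. The endpoint $\hat\sigma_i(e_1,e_2)(1)$ lies in the fiber over $[e_2]$, and by freeness of the action there is a unique continuous function $g_i\colon\widetilde{U}_i\to G$ with $\hat\sigma_i(e_1,e_2)(1)=g_i(e_1,e_2)\cdot e_2$. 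Uniqueness of lifts yields the key transformation law: replacing $(e_1,e_2)$ by $(h_1e_1,h_2e_2)$ changes $\hat\sigma_i$ to $h_1\cdot\hat\sigma_i$ and changes $g_i$ to $h_1\,g_i(e_1,e_2)\,h_2^{-1}$.

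Second, for each pair $(i,j)$ I would define
\[
W_{ij}=\{\,([e_1,x_1],[e_2,x_2])\in X_G\times X_G\mid ([e_1],[e_2])\in U_i\text{ and }(x_1,g_i(e_1,e_2)\cdot x_2)\in V_j\,\}
\]
together with the candidate section
\[
s_{ij}([e_1,x_1],[e_2,x_2])(t)=[\,\hat\sigma_i(e_1,e_2)(t),\;\tau_j(x_1,g_i(e_1,e_2)\cdot x_2)(t)\,].
\]
Well-definedness on the quotient is the heart of the construction: under the change $(e_i,x_i)\mapsto(h_ie_i,h_ix_i)$, the transformation law forces the $X$-argument of $\tau_j$ to be acted on diagonally by $h_1$, so $G$-invariance of $V_j$ keeps $W_{ij}$ unchanged, and $G$-equivariance of $\tau_j$ multiplies its output by $h_1$, matching the $h_1$-factor acquired by $\hat\sigma_i$; hence the class $[\hat\sigma_i(t),\tau_j(t)]$ in $X_G$ is unaltered. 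Openness of $W_{ij}$ follows from continuity of $g_i$ on $\widetilde{U}_i$. At $t=0,1$ one recovers the required endpoints $[e_1,x_1]$ and $[e_2,x_2]$.

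Finally, the sets $W_{ij}$ cover $X_G\times X_G$: given any point, first choose $i$ with $([e_1],[e_2])\in U_i$, then choose $j$ with $(x_1,g_i(e_1,e_2)\cdot x_2)\in V_j$. The result is a cover by $(m+1)(n+1)$ open sets with continuous local sections of the endpoint fibration, so $\secat(X_G^{[0,1]}\to X_G\times X_G)\leq(m+1)(n+1)$, and $\TC(X_G)\leq(m+1)(n+1)-1<(\TC_G(X)+1)(\TC(B)+1)$. The main obstacle I expect is the bookkeeping in the first step: producing the lift $\hat\sigma_i$ from the numerability assumption and establishing the precise transformation law for the holonomy $g_i$, since every subsequent well-definedness claim reduces to that computation.
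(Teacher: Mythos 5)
The paper does not prove this statement; it is quoted verbatim from Colman--Grant \cite[Theorem~6.21]{CG} and applied as a black box, so there is no internal proof to compare against. Taken on its own merits, your fibrewise-product construction of the cover $\{W_{ij}\}$ is the natural approach and the bookkeeping in steps two and three (well-definedness on the Borel quotient, endpoint conditions, openness, covering) is correct once the first step is secured.

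The first step, however, has a genuine gap exactly where you anticipated trouble. You write that ``uniqueness of lifts yields the key transformation law'' $\hat\sigma_i(h_1e_1,h_2e_2)=h_1\cdot\hat\sigma_i(e_1,e_2)$. Path lifts in a principal $G$-bundle are \emph{not} unique when $\dim G>0$: for example, in the Hopf bundle $S^3\to S^2$, the constant path at a point of $S^2$ lifts to \emph{every} path in the circle fibre with the prescribed initial point. The homotopy lifting property hands you \emph{a} lift $\hat\sigma_i$, but nothing forces the particular $\hat\sigma_i$ you obtain to transform equivariantly, and without that the transformation law for $g_i$ fails and every subsequent well-definedness claim collapses. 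The correct statement of what you need is the following standard (but nontrivial) fact about numerable principal $G$-bundles: one can choose a \emph{$G$-equivariant} path-lifting function $\Lambda\colon E\times_B B^{[0,1]}\to E^{[0,1]}$, i.e.\ $\Lambda(he,\gamma)=h\cdot\Lambda(e,\gamma)$. This is built locally from trivializing charts (where the local lift $(b_t,\,p_\alpha(e))$ is manifestly equivariant) and patched by a partition of unity on $B$ --- this is precisely where numerability is used. Setting $\hat\sigma_i(e_1,e_2)=\Lambda\bigl(e_1,\sigma_i([e_1],[e_2])\bigr)$ then gives the desired equivariance, and the rest of your argument goes through. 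So the proposal is salvageable, but the phrase ``uniqueness of lifts'' should be replaced by ``equivariance of a suitably chosen lifting function,'' and that equivariance should be established rather than asserted.
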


\begin{proof}[Proof of Theorem~$\ref{cota1}$]
Let $\overline{m}=(n_2,\ldots,n_r)$. Note that $\ppn$ can be thought of as the Borel space $S^{n_1}\times_{\mathbb{Z}_2}S_{\overline{m}}$. The result then follows from Theorem~\ref{CGTCG}.
\end{proof}

The argument in the proof of Theorem~\ref{cota1} can be used to give low upper bounds for the LS-category of projective product spaces (extending the phenomenon noted in~\cite[(2.21)]{davispps} when $\ppn$ has a full set of factoring spheres). Namely
\begin{equation}\label{cates}
\cat(\ppn)<(n_1+1)\ell(\overline{n}).
\end{equation}
Since $\TC\leq2\hspace{.3mm}\cat$, we get in particular
\begin{equation}\label{teces}
\TC(\ppn)<2(n_1+1)\ell(\overline{n})-1
\end{equation}
which improves on Theorem~\ref{cota1} only when $\ppn$ comes from a product having `enough' even dimensional spheres, i.e.~$k\geq C_{n_1}\ell(\overline{n})$ where
$$
C_{n_1}=\frac{2n_1+1-\TC(\P^{n_1})}{\TC(\P^{n_1})+1}.
$$
Note that, although $k\leq\ell(\overline{n})$, $C_{n_1}\!\ll1$ for any `generic' $n_1$.

\medskip
Just as~$(\ref{teces})$ and Theorem~$\ref{cota1}$ may fail to improve the upper bound in Theorem~$\ref{main2}$, the bound in~$(\ref{cates})$ is not always useful (for instance if all the $n_i$ are equal). For such cases it is worth keeping in mind that, in view of~\cite[Theorem~3.5]{berstein}, the standard estimate $\cat\leq\dim$ is improved by the inequality $\cat(\ppn)<\dim(\ppn)$ provided $n_1>1$ and $\ell(\overline{n})>1$.

\vspace{1mm}
{\sc Departamento de Matem\'aticas

Centro de Investigaci\'on y de Estudios Avanzados del IPN

M\'exico City 07000, M\'exico}

{\it E-mail address:} {\bf jesus@math.cinvestav.mx}
 
\medskip

 {\sc School of Mathematical Sciences
 
The University of Nottingham

University Park, Nottingham, NG7 2RD, UK}

{\it E-mail address:} {\bf Mark.Grant@nottingham.ac.uk}
 
\medskip

{\sc Departamento de Matem\'aticas

Universidad de Guanajuato

Guanajuato 36000, Gto, M\'exico}

{\it E-mail address:} {\bf enrique.torres@cimat.mx}
 
\medskip
{\sc Departamento de Matem\'aticas

Centro de Investigaci\'on y de Estudios Avanzados del IPN

M\'exico City 07000, M\'exico}
 
{\it E-mail address}: {\bf xico@math.cinvestav.mx}

\end{document}